\numberwithin{equation}{section}
\numberwithin{figure}{section}
\newtheorem{theorem}{Theorem}[section]
\newtheorem{lemma}[theorem]{Lemma}
\newtheorem{proposition}[theorem]{Proposition}
\newtheorem*{p-psp}{Parabolic partial sum property}
\theoremstyle{definition}
\newtheorem{definition}[theorem]{Definition}
\newtheorem{remark}[theorem]{Remark}
\newtheorem{example}[theorem]{Example}
\newtheorem*{note}{Note}
\newcommand{\conv}[0]{\mathrm{conv}}
\newcommand{\height}[0]{\mathrm{ht}}
\newcommand{\weight}[0]{\mathrm{wt}}
\title[Weak faces and a weight formula for weights, via parabolic partial sum property for roots]{Weak faces and a formula for weights\\ of highest weight modules,\\
via parabolic partial sum property for roots}
\author{G. Krishna Teja
}
\address{\addressmark{1}Department of Mathematics, Indian Institute of
Science, Bangalore -- 560012, India
}
\abstract{
Let $\mathfrak{g}$ be a finite or an affine type Lie algebra over
$\mathbb{C}$ with root system $\Delta$. We show a parabolic
generalization of the partial sum property for $\Delta$, which we term
the parabolic partial sum property. It allows any root $\beta$ involving
(any) fixed subset $S$ of simple roots, to be written as an ordered sum
of roots, each involving exactly one simple root from $S$, with each
partial sum also being a root. We show three applications of this
property to weights of highest weight $\mathfrak{g}$-modules:
(1)~We provide a minimal description for the weights of all
non-integrable simple highest weight $\mathfrak{g}$-modules, refining the
weight formulas shown by Khare [\textit{J.\ Algebra}\ 2016] and
Dhillon--Khare [\textit{Adv.\ Math.}\ 2017].
(2)~We provide a Minkowski difference formula for the weights of an
arbitrary highest weight $\mathfrak{g}$-module.
(3)~We completely classify and show the equivalence of two combinatorial
subsets -- weak faces and 212-closed subsets -- of the weights of all
highest weight $\mathfrak{g}$-modules. These two subsets were introduced
and studied by Chari--Greenstein [\textit{Adv.\ Math.}\ 2009], with
applications to Lie theory including character formulas. We also show
($3'$) a similar equivalence for root systems.
}
\keywords{Root systems, highest weight modules, 212-closed subsets, weak faces}
\begin{document}

\maketitle

\section{Introduction}

Let $\mathbb{Z}\subset \mathbb{R}\subset \mathbb{C}$ be the set of
integers, real and complex numbers, respectively. Throughout,
$\mathfrak{g}=\mathfrak{g}(A)$ stands for a complex finite-dimensional
simple or an affine Kac--Moody Lie algebra -- accordingly, in short we
say $\mathfrak{g}$ is of finite or affine type -- corresponding to a
Cartan matrix $A$. We fix for $\mathfrak{g}$: a Cartan subalgebra
$\mathfrak{h}$, triangular decomposition
$\mathfrak{n}^-\oplus\mathfrak{h}\oplus\mathfrak{n}^+$, root system
$\Delta\subset \mathfrak{h}^*$, set of simple roots $\Pi=\{\alpha_i\ | \
i\in \mathcal{I}\}$ in $\Delta$ and simple co-roots
$\Pi^{\vee}=\{\alpha_i^{\vee}\ |\ i\in\mathcal{I}\}\subset\mathfrak{h}$.
Here $\mathcal{I}$ is the (fixed) common indexing set for the simple
roots, simple co-roots, rows/columns of $A$, and the set of nodes in the
Dynkin diagram of $\mathfrak{g}$. We say $\Delta$ is of finite or affine
type depending on the type of $\mathfrak{g}$.

This note showcases in finite and affine types, some of the main results
in \cite{MBWWM} and \cite{WFHMRS}, which study the root systems and
weights of arbitrary highest weight modules (see Section~\ref{S3} for the
definition) over $\mathfrak{g}$.
The results stated below were shown more generally over all
Kac--Moody algebras in the above two papers. These are inspired by and
build on the works of Chari, Khare, and their co-authors
\cite{Chari-Greenstein}, \cite{Dolbin}, \cite{KhareAdv},
\cite{KhareJ.Alg}, \cite{Ridenour} etc.

\subsection{Parabolic partial sum property and its applications}

Begin by recalling the well-known property of root systems $\Delta$, the
\textit{partial sum property (PSP)}: every root in $\Delta$ is an ordered
sum of simple roots such that all partial sums are also roots. In this
paper, we give a novel ``parabolic-generalization'', termed the
\textit{parabolic partial sum property} -- or \textit{parabolic PSP} --
which we state formally in the next section. It has many applications to
representation theory and algebraic combinatorics; this note records
three of them. Informally, the parabolic PSP says that given a nonempty
set $S \subseteq \Pi$ of simple roots, every positive root $\beta$
involving some simple roots from $S$ is an ordered sum of positive roots,
such that each root involves exactly one simple root from $S$, and all
partial sums are also roots. Observe, the case $S = \Pi$ is the ``usual''
PSP.

The parabolic PSP was originally a question of Khare, whose motivation
was to obtain a ``minimal description'' for the weights of \textit{all}
non-integrable simple highest weight $\mathfrak{g}$-modules, and more
generally, all parabolic Verma modules -- thus, the label
``parabolic''.

The simple highest weight modules over $\mathfrak{g}$ are crucial in
representation theory, combinatorics, and physics among other areas.
These modules form the building blocks of, and thereby pave ways for the
study of, modules central to Lie theory. Their weights and characters,
particularly those of \textit{integrable} (simple) highest weight
$\mathfrak{g}$-modules, have rich combinatorial properties and numerous
applications. For instance, when $\mathfrak{g}$ is of finite
type $A$, these characters are precisely the Schur polynomials.   

Recall, the simple highest weight $\mathfrak{g}$-modules are indexed by
their highest weights $\lambda\in\mathfrak{h}^*$; correspondingly, let
$L(\lambda)$ denote the simple module. For a weight module $V$ --
meaning, $V$ is a direct sum of its weight spaces/the simultaneous
eigenspaces of $V$ for the action of $\mathfrak{h}$ -- let $\weight V$
denote the set of weights of $V$. 

The weights and characters of the integrable simple modules $L(\lambda)$
-- for all \textit{dominant integral} $\lambda\in\mathfrak{h}^*$ -- were
well understood decades ago. However, for non-integrable simple
$\mathfrak{g}$-modules $L(\lambda)$, even their sets of weights seem to
have not been known until 2016 \cite{KhareJ.Alg}. In this paper, Khare
showed three formulas for the weights of a large class of highest weight
$\mathfrak{g}$-modules, including all modules $L(\lambda)$.
Dhillon--Khare \cite{KhareAdv} extended these formulas to hold over all
Kac--Moody $\mathfrak{g}$. Here we focus on one of these three formulas,
the Minkowski difference formula \eqref{Minkowski difference formula for
wt L(lambda)}. This uses the $\mathbb{Z}_{\geq 0}$-cone of
the positive roots lying outside a subroot system
$\Delta_{J_\lambda}$. Finding minimal generating sets for these cones
would yield a minimal description for the weights of simple highest
weight $\mathfrak{g}$-modules. In this note we prove the parabolic PSP,
thereby solving the minimal description problem.

As a second application, the analysis and understanding of the above
minimal generating sets and the parabolic PSP, were fruitful in showing
several results about weights of arbitrary highest weight
$\mathfrak{g}$-modules in \cite{MBWWM} and \cite{WFHMRS}. Namely, in
\cite{MBWWM} we obtain a Minkowski difference formula for the set of
weights of \textit{every} highest weight $\mathfrak{g}$-module, which
looks similar to those shown by Dhillon--Khare for simple modules. This
note discusses that result, as well as the ones from \cite{WFHMRS}
explained in the next subsection.   

\subsection{Weak faces of root systems and weights}

The parabolic partial sum property and its two applications (above)
occupy the first half of this note. The second half is a third
application: classifying and determining two combinatorial subsets of
$\weight V$ and its convex hull, for arbitrary highest weight
$\mathfrak{g}$-modules $V$. These are the \textit{weak faces} and the
\textit{212-closed subsets} (see \cite{KhareJ.Alg} for the choice of
these names),
 and they arise from the combinatorics of root systems $\Delta$, and of
subsets that maximize linear functionals over $\Delta$. Such subsets
were studied by Chari and her co-authors in \cite{Chari-Greenstein},
\cite{Dolbin}
and used to show several interesting results in representation theory.
These include constructing Koszul algebras, obtaining character formulas
for the specialization at $q=1$ of Kirillov--Reshetikhin modules over
untwisted quantum affine algebras $U_q(\widehat{\mathfrak{g}})$, and
constructing irreducible ad-nilpotent ideals in parabolic subalgebras of
$\mathfrak{g}$. For a detailed overview of these motivations,
see \cite{KhareJ.Alg} and \cite{WFHMRS}.

Khare and Ridenour \cite{Ridenour} extended the results of
\cite{Chari-Greenstein} from maximizer subsets of $\Delta$ to the weights
falling on the faces of \textit{Weyl polytopes}, i.e., the shapes $\conv
(\weight L(\lambda))$ for all dominant integral
$\lambda\in\mathfrak{h}^*$. Even more generally, Khare \cite{KhareJ.Alg}
considered $\conv (\weight V)$ for highest weight modules $V$
with arbitrary highest weight $\lambda \in \mathfrak{h}^*$,
and introduced:

\begin{definition}\label{weak face defiinition}
Let $\mathbb{A}$ be a fixed non-trivial additive subgroup of
$(\mathbb{R},+)$, and $\emptyset\neq Y\subseteq X$ be two subsets of a
real vector space. Define $\mathbb{A}_{\geq 0}:=\mathbb{A}\cap
[0,\infty)$.
\begin{enumerate}
\item $Y$ is said to be a \textbf{\textit{weak}-$\mathbb{A}$-\textit{face of}} $X$ if
		\[
		\begin{rcases*}
		\sum\limits_{i=1}^{n}r_iy_i=\sum\limits_{j=1}^{m}t_jx_j\text{ and }\sum\limits_{i=1}^{n}r_i=\sum\limits_{j=1}^{m}t_j >0\qquad
		\text{for }m,n\in\mathbb{N},\\
		y_i\in Y,\text{ }
		x_j\in X,\text{ }r_i,t_j\in \mathbb{A}_{\geq 0}\text{ }\forall\text{ }1\leq i\leq n,1\leq j\leq m
		\end{rcases*}
		\implies\text{ }x_j\in Y\ \forall t_j\neq 0.
		\]
		By \textit{weak faces} of $X$ we mean the collection of all weak-$\mathbb{A}$-faces of $X$ for all additive subgroups $\{0\}\subsetneqq\mathbb{A}\subseteq (\mathbb{R},+)$.
\item $Y$ is said to be a \textbf{212-\textit{closed subset of}}
\big(or \textbf{212-\textit{closed in}}\big) $X$ if
		\[
		(y_1)+(y_2)=(x_1)+(x_2)\ \  \text{for some }\ y_1,y_2\in Y\text{ and }x_1,x_2\in X\quad\implies\quad x_1,x_2\in Y.
		\]
\end{enumerate}
\end{definition}

\begin{remark}\label{weak face is 212-closed}
For any pair of subsets $Y \subseteq X$ of a real vector space, and all
non-trivial $\mathbb{A} \subseteq (\mathbb{R},+)$, it can be checked by
the definitions that each part below implies the next: (1) $Y$ maximizes
a linear functional $\psi$ on $X$, i.e., $\psi(x)\leq \psi(y)$ for all
$x\in X$ and $y\in Y$. (2)~$Y$ is a weak-$\mathbb{R}$-face of $X$. (3)
$Y$ is a weak-$\mathbb{A}$-face of $X$. (4) $Y$ is 212-closed in $X$.
\end{remark}

For a subset $X\neq\emptyset$ of a real vector space, weak faces of $X$
generalize the ``classical'' faces of the convex hull $\conv (X)$.
Namely, when $\conv (X)$ is a polyhedron, \cite{Ridenour} shows that weak
faces of $X$ are precisely the elements of $X$ that fall on faces of
$\conv(X)$ -- i.e., conditions (1)--(3) in Remark~\ref{weak face is
212-closed} are equivalent. Our goal is to show that the same holds for
the even weaker notion (a priori) of (4) 212-closed subsets, when $X$ is
a distinguished set in Lie theory and algebraic combinatorics, i.e., of
the form $\Delta$, $\Delta \sqcup \{ 0 \}$, $\weight V$, $\conv(\weight
V)$.

\begin{remark}\label{212-closed interpretation}
There is a natural combinatorial interpretation for ``212-closed subsets
$Y \subseteq X$'' of a real vector space $\mathbb{V}$ when $X$ is the set
of lattice points in a lattice polytope -- i.e., $X$ is the intersection
of $\conv(X)$ with a lattice in $\mathbb{V}$ that contains the vertices
of $\conv(X)$. (We explain the connection to $X = \weight L(\lambda)$ in
Remark \ref{lattice polytope discussion}.) Suppose $Y$ denotes a subset
of ``colored'' (or in a contemporary spirit, ``infected'') lattice points
in $X$, with the property that if $y \in Y$ is the average of two points
in $X$, then the ``color'' or ``infection'' spreads to both points
from~$y$. (More precisely, if two pairs of -- not necessarily distinct --
points have the same average, and one pair is colored, then the color
spreads to the other pair.) We would like to understand the extent to
which the spread happens. A ``continuous'' variant works with the entire
convex hull itself, rather than the lattice points in $X$.
\end{remark}

Khare \cite{KhareJ.Alg} classified the weak faces and 212-closed subsets for $X=\weight V$, for $V$ from a large class of highest weight $\mathfrak{g}$-modules including all simple highest weight modules.
He showed that these two classes of subsets are equal, and they coincide with the sets of weights falling on the faces of the convex hull of weights. The interesting part here is:

\begin{remark}\label{lattice polytope discussion}
$\conv (\weight L(\lambda))$ is a convex polytope, and moreover,
a lattice polytope as in Remark~\ref{212-closed interpretation}; see
\eqref{Bump's question}, which explains the latter part. Khare's
(partial) results and ours below show that for \textit{all} such lattice
polytopes $\conv (\weight L(\lambda))$, the weak faces and 212-closed
subsets of $X$ are the same, and these two classes of subsets are
equivalent to the faces of $\conv(X)$. This equivalence is striking in
view of the ``minimality'' in the definition of 212-closed subsets,
particularly in contrast to the definition of (weak) faces. Furthermore,
it naturally raises the question of exploring for general lattice
polytopes, the extent to which these results hold; particularly, the
equivalence of these three notions.
\end{remark}

Recently in \cite{WFHMRS}, we have shown all of the (partial) results of
Khare \cite{KhareJ.Alg}, for important sets in Lie theory: $X= \Delta$ or
$\Delta\sqcup\{0\}=\weight \mathfrak{g}$, or $\weight V\text{ or
}\conv(\weight V)$ (the $\mathbb{R}$-\textit{convex hull} of $\weight V$)
for any highest weight module $V$, over any Kac--Moody $\mathfrak{g}$.
More precisely, for $X=\weight V$ (or $\conv (\weight V)$), we completely
classify these two classes of subsets of $X$ over any Kac--Moody algebra
$\mathfrak{g}$. More strongly, we show their equivalence with the sets of
weights on the faces (respectively, with the faces) of $\conv (\weight
V)$. For $X=\Delta\sqcup\{0\}$ or $X=\Delta$, we show the analogous
results, and that these two classes of subsets of $X$ are equal, except
in two interesting cases where $X=\Delta$ is of type either $A_2$ or
$\widehat{A_2}$. In this note, we discuss these results of \cite{WFHMRS}
for $\mathfrak{g}$ of finite and affine types. 

\section{Parabolic partial sum property}
Throughout, $\Delta$ is the root system of $\mathfrak{g}$ which is either
finite-dimensional simple or an affine Kac--Moody Lie algebra;
accordingly, we say $\Delta$ is of finite type or affine type,
respectively. $\Delta^+$ denotes the set of positive roots in $\Delta$.
Recall $\mathfrak{g}$ has the root space decomposition
$\mathfrak{h}\oplus \bigoplus_{\beta\in\Delta}\mathfrak{g}_{\beta}$,
where $\mathfrak{g}_{\beta} :=\{x\in\mathfrak{g}\ | \ h x=
\langle\beta,h\rangle x\ \forall \ h\in\mathfrak{h}\}$ is the root space
corresponding to $\beta$; here $\langle\beta,h\rangle$ denotes the
evaluation of $\beta\in\mathfrak{h}^*$ at $h\in\mathfrak{h}$. We begin by
developing some notation needed to state the parabolic partial sum
property.
Let $\mathcal{I}$ index the simple roots in $\Delta^+ \subset
\Delta$.
For $\emptyset\neq I\subseteq \mathcal{I}$ we define a special (for this entire paper) subset $\Delta_{I,1}$ of positive roots, and for a vector $x=\sum_{i\in\mathcal{I}} c_i\alpha_i$ for $c_i\in\mathbb{C}$ we define two important height functions, as follows: 
\begin{equation}\label{unit I-height roots}
  \height(x):=\sum_{i\in\mathcal{I}}c_i, \quad\text{and}\quad
  \height_I(x):=\sum_{i\in I} c_i.\qquad\quad
  \Delta_{I,1}:=\left\{\beta\in\Delta\ |\ \height_I(\beta)=1\right\}\
  \subseteq \Delta^+. \end{equation}
  The subsets $\Delta_{I,1}$ form the minimal generating sets for the cones in weights of simple highest weight modules mentioned in the introduction:

\begin{theorem}[{\bf Parabolic Partial Sum Property}]\label{P-psp}
Let $\Delta$ be a finite or an affine root system, and fix $\emptyset\neq
I\subseteq \mathcal{I}$. Suppose $\beta$ is a positive root with
$m=\height_I(\beta)>0$. Then
\[
\text{there exist roots }\ \ \gamma_1,\ldots,\gamma_m\in\Delta_{I,1}\ \
\text{ such that }\ \ \beta=\sum_{j=1}^m\gamma_j\ \ \text{ and }\ \
\sum_{j=1}^i\gamma_j\in\Delta^+\ \forall \ 1\leq i\leq m.
\]
In other words, every root with positive $I$-height is an ordered sum of
roots, each with unit $I$-height, such that each partial sum of that
ordered sum is also a root.
\end{theorem}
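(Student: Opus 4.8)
The plan is to prove Theorem~\ref{P-psp} by induction on $m = \height_I(\beta)$, with the base case $m=1$ being vacuous (take $\gamma_1 = \beta$), and to reduce the inductive step to the ordinary partial sum property (PSP) applied inside a suitable parabolic subroot system. First I would set $J := \mathcal{I}\setminus I$ and consider the standard Levi decomposition of $\Delta^+$ relative to $J$: every positive root $\beta$ with $\height_I(\beta) = m$ lies in the ``$m$-th layer'' over $\Delta_J$, and I want to peel off one unit-$I$-height summand at a time from the \emph{bottom} (or top) so that both the peeled-off root and the remainder stay in $\Delta^+$. The natural candidate for the peeled root is obtained by applying the ordinary PSP to $\beta$ to write $\beta = \alpha_{i_1} + \alpha_{i_2} + \cdots + \alpha_{i_N}$ as an ordered sum of simple roots with all partial sums roots; then I would locate the positions where a simple root from $I$ appears, and regroup the consecutive simple-root blocks so that each block contains exactly one index from $I$ and constitutes a root.

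The key step is the regrouping: given the ordered sum of simple roots with all partial sums roots, I would show that we can always split it as $\beta = \gamma_1 + \cdots + \gamma_m$ where each $\gamma_j$ is itself a \emph{root} with $\height_I(\gamma_j) = 1$, and each $\gamma_1 + \cdots + \gamma_i$ is a root. The cut points should be chosen just \emph{after} each occurrence of a simple root from $I$ except that we must ensure each resulting $\gamma_j$ is connected in the Dynkin-diagram sense (so that the block sum is a root). If a block of simple roots summing between two consecutive $I$-cuts is disconnected, it is not a root, so the naive cut fails; here I would instead argue more carefully, possibly choosing the \emph{first} $I$-index occurrence and moving the simple roots on its ``$J$-side'' that are connected to it across, or re-running PSP on the partial sum $\beta - \alpha_{i_1}$-type remainders. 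Concretely, I expect to use: (i) the fact that the support of a root is connected in the Dynkin diagram; (ii) for a positive root $\beta$ with $\height_I(\beta) = m \geq 2$, there exists a simple root $\alpha_i$ with $i \in I$ such that $\beta - \alpha_i \in \Delta^+$ and $\height_I(\beta - \alpha_i) = m - 1$ --- this ``removable $I$-simple root'' statement would let the induction go through directly if I can also guarantee the complementary $\gamma_m := \alpha_i + (\text{a connected } J\text{-string})$ can be carved out; or symmetrically an \emph{addable} statement.

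The cleaner route I would actually pursue: induct on $\height(\beta)$ (total height), not just $\height_I(\beta)$. For the step, use PSP to find a simple root $\alpha_k$ with $\beta - \alpha_k \in \Delta^+$. If $k \in J$: apply the induction hypothesis to $\beta - \alpha_k$ (same $I$-height $m$), obtaining $\beta - \alpha_k = \gamma_1 + \cdots + \gamma_m$ with the partial-sum property; then I need to ``insert'' $\alpha_k$ back into one of the $\gamma_j$ or between them so that the $\gamma$'s remain roots of unit $I$-height and partial sums remain roots --- insert it at the earliest $j$ for which $\gamma_j + \alpha_k \in \Delta$, using connectivity of supports and the fact that $\beta - \alpha_k + \alpha_k = \beta$ is a root to ensure such $j$ exists. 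If $k \in I$: then $\height_I(\beta - \alpha_k) = m-1$; apply the induction hypothesis to get $\beta - \alpha_k = \gamma_1 + \cdots + \gamma_{m-1}$, and set $\gamma_m := \alpha_k$ (or merge $\alpha_k$ into the last block), checking that $\sum_{j=1}^{m} \gamma_j = \beta$ is a root and lower partial sums are unchanged. The main obstacle, and the place I expect to spend the most care, is the ``insertion'' lemma in the $k \in J$ case: proving that a simple root $\alpha_k$ with $\beta-\alpha_k,\beta \in \Delta^+$ can always be inserted into one of the existing blocks while preserving that every block is a root and every partial sum is a root. I anticipate this needs a rank-two / string-module argument ($\langle \gamma_j + \cdots, \alpha_k^\vee\rangle$ controls whether adding $\alpha_k$ lands in $\Delta$) together with a minimal-counterexample analysis, and that the finite-versus-affine distinction in the theorem's hypothesis enters exactly here (e.g., to rule out pathologies available only in non-symmetrizable or higher-rank indefinite types).
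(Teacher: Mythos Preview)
Your approach is genuinely different from the paper's main argument, and the gap you yourself flag is real and not resolved in your sketch.

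The paper does \emph{not} work combinatorially at the root level. It deduces the parabolic PSP from a structural result (Theorem~\ref{Lie words theorem}): the root space $\mathfrak{g}_\beta$ is spanned by right-normed Lie words $[e_{\gamma_m},[\ldots,[e_{\gamma_2},e_{\gamma_1}]\ldots]]$ with each $\gamma_t\in\Delta_{I,1}$. This is proved by induction on the length of an arbitrary right-normed Lie word in simple root vectors, using the Jacobi identity to push an outermost $e_{i_n}$ with $i_n\notin I$ inward. A nonzero such Lie word then forces every inner subword to be nonzero, hence every partial sum is a root. This handles finite and affine types uniformly, with no case analysis and no insertion step.

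Your ``insertion'' step in the $k\in J$ case is the genuine obstacle, and your sketch does not close it. Replacing some $\gamma_j$ by $\gamma_j+\alpha_k$ changes \emph{all} partial sums $P_i$ with $i\ge j$ to $P_i+\alpha_k$; you need each of these to remain a root, not just $\gamma_j+\alpha_k$ and $P_m+\alpha_k=\beta$. A string argument on a single $\gamma_j$ does not control the intermediate $P_i+\alpha_k$. The paper's alternate finite-type proof (the Lemma following Theorem~\ref{Lie words theorem}) shows how to avoid this entirely: instead of inserting, one proves directly that some $\gamma\in\Delta_{I,1}$ can be \emph{peeled off} from $\beta$, i.e.\ $\beta-\gamma\in\Delta^+$. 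The key external input is \cite[Lemma~1.1(iii)]{Dolbin}: from $\beta-\alpha-\eta\in\Delta^+$ and $\beta\in\Delta^+$ one gets that either $\beta-\eta$ or $\alpha+\eta$ is a root, either of which contradicts minimality. That lemma is exactly the missing ingredient in your plan; with it, induction on $\height(\beta)$ goes through for finite type without any insertion. Note, however, that this alternate argument uses positive-definiteness of the form and so does not cover the affine case; there the paper relies on the Lie-word route.
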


When $I=\mathcal{I}$, this is precisely the usual PSP. We
next present another example:

\begin{example}\label{type A_6 example}
 Let $\mathfrak{g}$ be of type $A_6$ ($7\times 7$ trace zero matrices
 over $\mathbb{C}$), $\mathcal{I}=\{1,2,3,4,5,6\}$, and fix $I=\{2,4,5\}$. The Dynkin diagram for $\mathfrak{g}$ (with nodes from $I$ boxed) is:   
\begin{center}
  \begin{dynkinDiagram}[
*/.append style={ultra thick, blue color},edge length=0.5cm,edge/.style={black,very thick}, labels={\textcolor{black}{\textbf{1}},\boxed{\textcolor{black}{\textbf{2}}},\textcolor{black}{\textbf{3}},\boxed{\textcolor{black}{\textbf{4}}},\boxed{\textcolor{black}{\textbf{5}}},\textcolor{black}{\textbf{6}}},scale=2]{A}{******}
  \end{dynkinDiagram}
\end{center}
 Recall, the roots in $\Delta$ are precisely $\sum_{i\in T}\alpha_i$,
 where $T \subset \mathcal{I}$ has consecutive indices.
 Now,
\[
\Delta_{I,1}=\big\{\alpha_2,\ \ \alpha_1+\alpha_2,\ \ \alpha_2+\alpha_3,\
\ \alpha_1+\alpha_2+\alpha_3,\ \
\alpha_4,\ \ \alpha_3+\alpha_4,\ \ \alpha_5,\ \ \alpha_5+\alpha_6\big\}.
\]
  Let $\beta=\sum_{i=1}^6\alpha_i$ denote the highest root in $\Delta$.
  Check that $\height(\beta)=6$ and $\height_I(\beta)=3$. Set
  $\gamma_1=\alpha_1+\alpha_2$, $\gamma_2=\alpha_3+\alpha_4$ and
  $\gamma_3=\alpha_5+\alpha_6$. Observe that
  $\gamma_1+\gamma_2+\gamma_3=\beta$, and moreover, both $\gamma_1$ and
  $\gamma_1+\gamma_2$ are roots.
 \end{example}

\begin{proof}[\textnormal{\textbf{Sketch of proof for parabolic PSP}}]
Fix $\emptyset\neq I\subseteq \mathcal{I}$ and a root $\beta$ with
$m=\height_I(\beta)>0$. The parabolic PSP trivially holds for $\beta$ if
$\height_I(\beta)=1$; hence, we now assume $\height_I(\beta)>1$. The
parabolic PSP is shown by applying the following stronger, structural
result:

\begin{theorem}\label{Lie words theorem}
Let $\mathfrak{g}$ be of finite or affine type. Fix $\emptyset\neq
I\subseteq \mathcal{I}$, and $\beta\in \Delta^+$ with
$m := \height_I(\beta)>0$. Then the root space $\mathfrak{g}_{\beta}$ is
spanned by the right normed Lie words of the form:
\begin{equation}\label{right normed Lie words}
\big[e_{\gamma_m},[\cdots, [e_{\gamma_2},e_{\gamma_1}]\cdots]\big]\
\text{ where }\gamma_t\in\Delta_{I,1},\
e_{\gamma_t}\in\mathfrak{g}_{\gamma_t}, \ \forall\ 1\leq t\leq m,\
\text{and }\sum_{t=1}^m \gamma_t=\beta.
\end{equation}
\end{theorem}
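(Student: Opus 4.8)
The plan is to prove Theorem~\ref{Lie words theorem} by induction on $m = \height_I(\beta)$, using the standard commutator/Jacobi machinery for root spaces together with the ordinary partial sum property applied on the ``$I$-level''. First I would dispose of the base case $m=1$: here $\beta \in \Delta_{I,1}$ and $\mathfrak{g}_\beta$ is one-dimensional (or, in the affine imaginary-root situation, we note that roots of unit $I$-height with $I \subsetneq \mathcal{I}$ are necessarily real, so this subtlety does not arise), so the claim is immediate. For the inductive step, the key observation is that if $\height_I(\beta) = m > 1$ then $\beta$ is \emph{not} simple, and more: since every nonzero root space is generated by brackets of simple root vectors, $\mathfrak{g}_\beta = \sum_{i} [\mathfrak{g}_{\alpha_i}, \mathfrak{g}_{\beta - \alpha_i}]$ where the sum runs over $i$ with $\beta - \alpha_i \in \Delta$. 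I would split this sum according to whether $\alpha_i \in \Delta_{I,1}$ (equivalently $i \in I$, i.e. $\height_I(\alpha_i)=1$) or $\height_I(\alpha_i) = 0$ (i.e. $i \notin I$), so that in either case $\beta - \alpha_i$ still has positive $I$-height, and crucially one summand always has $I$-height exactly $m$ and the other has $I$-height $m-1$ or $m$.

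The heart of the argument is to rewrite an arbitrary bracket $[e_{\alpha_i}, e_{\beta-\alpha_i}]$ in the desired right-normed form. When $i \notin I$ (so $\height_I(\beta - \alpha_i) = m$), I apply the inductive hypothesis to $\mathfrak{g}_{\beta-\alpha_i}$, expressing $e_{\beta-\alpha_i}$ as a sum of right-normed words $[e_{\gamma_m},[\cdots,[e_{\gamma_2},e_{\gamma_1}]\cdots]]$ with $\gamma_t \in \Delta_{I,1}$ and $\sum \gamma_t = \beta - \alpha_i$; then I must absorb the outer bracket with $e_{\alpha_i}$ into the word. This is done by repeatedly applying the Jacobi identity to push $e_{\alpha_i}$ inward: $[e_{\alpha_i},[e_{\gamma_t}, w]] = [[e_{\alpha_i},e_{\gamma_t}],w] + [e_{\gamma_t},[e_{\alpha_i},w]]$, and since $\height_I(\alpha_i) = 0$, each $\alpha_i + \gamma_t$ (when a root) again lies in $\Delta_{I,1}$, so the $I$-height structure is preserved at every stage; inductively this terminates in a sum of right-normed words of the required shape. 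When $i \in I$ (so $\alpha_i \in \Delta_{I,1}$ and $\height_I(\beta-\alpha_i) = m-1$), I apply the inductive hypothesis to $\mathfrak{g}_{\beta-\alpha_i}$ to get right-normed words of length $m-1$, and simply prepend $e_{\alpha_i}$: $[\cdots] \mapsto [e_{\gamma_{m-1}},[\cdots[e_{\gamma_1}, e_{\alpha_i}]\cdots]]$ — but wait, this puts $e_{\alpha_i}$ on the \emph{inside}, whereas I might instead have the word with $e_{\alpha_i}$ outside, so again I invoke Jacobi to commute it to the innermost slot, exactly as before, using that brackets of $\alpha_i$ with the $\gamma_t$'s land in $\Delta_{I,1}$ or vanish. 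Either way, reindexing yields a right-normed word of length $m$ with all factors in $\Delta_{I,1}$ summing to $\beta$.

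Having Theorem~\ref{Lie words theorem}, the parabolic PSP (Theorem~\ref{P-psp}) follows at once: fix any nonzero right-normed word as in \eqref{right normed Lie words} spanning a nonzero direction in $\mathfrak{g}_\beta$ (such exists since $\mathfrak{g}_\beta \neq 0$); then for each $1 \leq i \leq m$ the inner sub-bracket $[e_{\gamma_i},[\cdots,[e_{\gamma_2},e_{\gamma_1}]\cdots]]$ is nonzero (a nested bracket cannot vanish partway and then revive), hence lies in a nonzero root space, forcing each partial sum $\sum_{j=1}^i \gamma_j$ to be a root; and each $\gamma_j \in \Delta_{I,1}$ has unit $I$-height by construction. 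The ordered sum $\gamma_1, \dots, \gamma_m$ is the one sought.

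The main obstacle I anticipate is making the Jacobi-pushing argument genuinely rigorous rather than hand-wavy: one must verify that at each step the $I$-heights of all intermediate roots stay controlled (no root of $I$-height $\geq 2$ is ever created among the ``small'' factors, because $\alpha_i$ contributes $0$ to $I$-height when $i\notin I$ and because prepending a single $\alpha_i$ with $i\in I$ raises $I$-height by exactly one only for the fully assembled word), and that the induction on word length inside the Jacobi expansion is well-founded — the number of factors strictly to the left of $e_{\alpha_i}$ decreases, or one produces a strictly shorter word. A secondary subtlety, special to affine type, is confirming that no issue arises from imaginary roots or from root spaces of dimension $>1$: since we only ever require $\height_I(\gamma) = 1$ with $I$ fixed, and imaginary affine roots have $\height_I(n\delta) = n\sum_{i\in I}a_i > 1$ whenever $I$ meets the support of $\delta$ (and $=0$ otherwise, which is excluded), every $\gamma \in \Delta_{I,1}$ is real with one-dimensional root space, so the generation statement is about genuinely constrained, well-behaved pieces. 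Once these bookkeeping points are nailed down, the proof is essentially a careful organization of the standard fact that root spaces are generated by iterated brackets of Chevalley generators.
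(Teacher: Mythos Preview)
Your overall architecture---use the fact that $\mathfrak{g}_\beta = \sum_i [\mathfrak{g}_{\alpha_i},\mathfrak{g}_{\beta-\alpha_i}]$, then push the extra simple root vector inward via Jacobi when $i\notin I$---is exactly the paper's approach. However, your induction is set up on the wrong parameter, and this creates a genuine gap. You induct on $m=\height_I(\beta)$, but in the case $i\notin I$ you then write ``I apply the inductive hypothesis to $\mathfrak{g}_{\beta-\alpha_i}$'' even though $\height_I(\beta-\alpha_i)=m$ unchanged. The $m$-inductive hypothesis gives you nothing here, so you have no expression for $e_{\beta-\alpha_i}$ as a sum of words in $\Delta_{I,1}$-pieces to feed into the Jacobi-pushing step. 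The paper instead inducts on $n=\height(\beta)$ (equivalently, on the length of a right-normed word in \emph{simple} root vectors spanning $\mathfrak{g}_\beta$): since $\height(\beta-\alpha_i)=n-1$ regardless of whether $i\in I$ or not, the inductive hypothesis always applies to $\mathfrak{g}_{\beta-\alpha_i}$. With that single change your argument becomes correct and coincides with the paper's. (Equivalently you could run a double induction, outer on $m$ and inner on $\height$, but the paper's single induction on $\height$ is cleaner.)

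Two smaller points. First, in the case $i\in I$ you overcomplicate things: if $w$ is a right-normed word of length $m-1$ with factors in $\Delta_{I,1}$, then $[e_{\alpha_i},w]$ is already a right-normed word of length $m$ with outermost factor $\alpha_i\in\Delta_{I,1}$---no Jacobi needed. Second, your claim that every $\gamma\in\Delta_{I,1}$ is real in the affine case is false in general: if $I=\{i_0\}$ with $a_{i_0}=1$ in $\delta=\sum a_i\alpha_i$ (e.g.\ $I=\{0\}$ in type $\widehat{A_\ell}$), then $\delta\in\Delta_{I,1}$ is imaginary with $\dim\mathfrak{g}_\delta>1$. Fortunately this is harmless: the base case $m=1$ is the tautology that $\mathfrak{g}_\beta$ is spanned by its own elements, and nowhere else does the argument use one-dimensionality of the $\mathfrak{g}_{\gamma_t}$.
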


In fact, a stronger result (in \cite{MBWWM}) shows the parabolic PSP to
the best possible extent, and moreover at the level of \textit{Lie
words}, via proving Theorem \ref{Lie words theorem} for \textit{any}
general Lie algebra (over any field) graded over \textit{any} free
abelian semigroup. 
Indeed, by (the stronger analogue of) Theorem \ref{Lie words theorem},
let the Lie word in \eqref{right normed Lie words} be non-zero. Then all
its right normed Lie sub-words -- namely, $\big[e_{\gamma_i},[\cdots,
[e_{\gamma_2},e_{\gamma_1}]\cdots\big]$ $\forall$ $1\leq i \leq m$ -- are
non-zero. This immediately implies that each partial sum of
$\sum_{t=1}^m\gamma_t$ is a root, proving the parabolic PSP.

Finally, the proof for Theorem~\ref{Lie words theorem} involves structure
theory ideas, and goes as follows. Fix a \textit{right normed Lie word}
$0\neq x=\big[e_{i_n},[\cdots,[e_{i_2},e_{i_1}]\cdots]\big]$ in
$\mathfrak{g}_{\beta}$, where $e_{i_t}$ is a (simple) root vector in the
simple root space $\mathfrak{g}_{\alpha_{i_t}}$ $\forall$ $1\leq t\leq n$
and $\sum_{t=1}^n\alpha_{i_t}=\beta$. Recall, every root space of
$\mathfrak{g}$ is spanned by such right normed Lie words. Inducting on
$n$, we can show that $x$ can be written as a linear combination of the
Lie words as in \eqref{right normed Lie words}. More precisely, we assume
that $\big[e_{i_{n-1}},[\cdots,[e_{i_2},e_{i_1}]\cdots\big]$ is in the
span of Lie words as in \eqref{right normed Lie words}, and then use the
Jacobi identity to take $e_{i_n}$ inside these new Lie words if
$\height_I(\alpha_{i_n})=0$. 
 \end{proof}

Alternately for $\mathfrak{g}$ of finite type, the parabolic PSP is
implied by:

\begin{lemma}
Assume that $\mathfrak{g}$ is of finite type, and $\emptyset\neq
I\subseteq \mathcal{I}$. Suppose $\beta\in\Delta^+$ is such that
$\height_I(\beta)>1$. Then there exists a root $\gamma\in\Delta_{I,1}$
such that $\beta-\gamma\in\Delta^+$.  
\end{lemma}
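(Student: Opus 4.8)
The plan is to reduce the claim to a statement about root systems of finite type that can be checked by a Weyl-group / highest-root argument, rather than invoking the full machinery of Theorem~\ref{Lie words theorem}. First I would fix $\emptyset\neq I\subseteq\mathcal{I}$ and a positive root $\beta$ with $\height_I(\beta)=m>1$. Since $\mathfrak{g}$ is of finite type, the subset $\supp(\beta)\subseteq\mathcal{I}$ spans a connected subdiagram (supports of roots are connected in finite type), so after replacing $\mathcal{I}$ by $\supp(\beta)$ and $I$ by $I\cap\supp(\beta)$ I may assume $\beta$ has full support and $I\neq\emptyset$. The goal is to peel off one root $\gamma\in\Delta_{I,1}$ with $\beta-\gamma\in\Delta^+$.

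The key step is the following observation. Consider the set $\Delta^+_{<\beta}:=\{\delta\in\Delta^+ : \beta-\delta\in\Delta^+\}$ of positive roots $\delta$ ``below'' $\beta$ in the sense that $\beta-\delta$ is again a positive root; equivalently, by the usual PSP applied inside $\supp(\beta)$, these are exactly the partial sums occurring in some ordered-sum expression of $\beta$ into simple roots. I would argue that among these partial sums one can always find one with $I$-height exactly $1$: start from any PSP-expression $\beta=\alpha_{i_1}+\alpha_{i_2}+\cdots$ of $\beta$ into simple roots with all partial sums roots, and let $\delta_k=\alpha_{i_1}+\cdots+\alpha_{i_k}$ be the partial sums. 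The sequence $\height_I(\delta_k)$ starts at $0$ or $1$, increases by $0$ or $1$ at each step, and ends at $m>1$; hence it takes the value $1$ at some index $k$, and then $\gamma:=\delta_k\in\Delta_{I,1}$ while $\beta-\gamma=\alpha_{i_{k+1}}+\cdots$ is a root (being the final partial sum complement, or by symmetry of the PSP argument applied to $\beta-\delta_k$). Actually the cleanest route: apply the ordinary PSP to $\beta$ to get $\beta=\alpha_{j_1}+\cdots+\alpha_{j_{\height(\beta)}}$ with all partial sums roots; since $m>1$ there is a smallest index $k$ with $\height_I(\alpha_{j_1}+\cdots+\alpha_{j_k})=1$, and because the complementary tail $\alpha_{j_{k+1}}+\cdots+\alpha_{j_{\height(\beta)}}$ equals $\beta$ minus a partial sum, it is a positive root as well (this uses that in the PSP the ``reversed'' complements of partial sums are also roots, which follows from applying the PSP inside the support and the fact that $\beta-\delta_k\in\Delta^+$ whenever $\delta_k,\beta\in\Delta^+$ with $\delta_k<\beta$ — a standard finite-type fact proved via $\mathfrak{sl}_2$-strings or the Weyl group).

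The main obstacle I anticipate is precisely justifying ``$\delta<\beta$ with $\delta,\beta\in\Delta^+$ implies $\beta-\delta\in\Delta^+$,'' which is \emph{false} in general finite type (e.g.\ $\beta=\alpha_1+2\alpha_2+\alpha_3$ in $B_3$-type situations has partial-sum subtleties) — so I would not use it in that blunt form. Instead I would run the induction directly: by the ordinary PSP write $\beta=\gamma'+\alpha_i$ with $\gamma'\in\Delta^+$ and $\alpha_i$ simple; if $\height_I(\alpha_i)=0$ then $\height_I(\gamma')=m>1$, so by induction on $\height(\beta)$ there is $\gamma\in\Delta_{I,1}$ with $\gamma'-\gamma\in\Delta^+$, and then $\beta-\gamma=(\gamma'-\gamma)+\alpha_i$; I then need $(\gamma'-\gamma)+\alpha_i\in\Delta^+$, which holds because $\gamma'-\gamma$ and $\gamma'=(\gamma'-\gamma)+\gamma$ are both roots, forcing $\langle\gamma'-\gamma,\alpha_i^\vee\rangle$ to be controlled, and one uses the $\mathfrak{sl}_2$-string through $\gamma'-\gamma$ in the $\alpha_i$-direction together with $\gamma'+\alpha_i\notin\Delta$ being impossible since $\beta=\gamma'+\alpha_i\in\Delta$ — so adding $\alpha_i$ to $\gamma'-\gamma$ stays in $\Delta^+$. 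If instead $\height_I(\alpha_i)=1$, then since $m>1$ we have $\height_I(\gamma')=m-1\geq1$; if $m-1\geq 2$ apply induction to $\gamma'$ to extract $\gamma\in\Delta_{I,1}$ with $\gamma'-\gamma\in\Delta^+$ and finish as before, while if $m-1=1$, apply induction to $\gamma'$ (which has $\height(\gamma')<\height(\beta)$ and $\height_I(\gamma')=1$... handled in the base) — the base case $\height_I(\beta)=1$ would be vacuous here, so I instead take $\gamma=\alpha_i$ when $\gamma'\in\Delta^+$, giving $\beta-\gamma=\gamma'\in\Delta^+$ directly. Thus the genuinely delicate point is the "adding back $\alpha_i$" step, and I would handle it uniformly by the $\mathfrak{sl}_2$-string lemma: if $\eta,\eta+\delta\in\Delta$ with $\delta$ simple, and $\eta+\delta+\delta\notin\Delta$... — more simply, if $\eta\in\Delta^+$, $\delta=\alpha_i$ simple, and $\eta+\alpha_i\notin\Delta$, then $\langle\eta,\alpha_i^\vee\rangle\geq0$, and since $\eta+\alpha_i=\beta-\gamma$... — this bookkeeping is routine but must be done with care, and it is the only place where the finite-type hypothesis is essentially used (to guarantee the strings and supports behave).
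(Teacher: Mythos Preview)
Your inductive skeleton is essentially the paper's: peel off a simple root $\alpha_i$ from $\beta$ (so $\gamma'=\beta-\alpha_i\in\Delta^+$), apply the induction hypothesis to $\gamma'$ to obtain $\gamma\in\Delta_{I,1}$ with $\gamma'-\gamma\in\Delta^+$, and then try to ``add $\alpha_i$ back''. The case $\height_I(\alpha_i)=1$ is indeed immediate (take $\gamma=\alpha_i$), and the paper's minimal-counterexample formulation disposes of it the same way. The genuine content is the case $\height_I(\alpha_i)=0$, and here your justification does not go through.

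Your $\mathfrak{sl}_2$-string argument is a non sequitur: the fact that $\gamma'+\alpha_i=\beta\in\Delta$ tells you about the $\alpha_i$-string through $\gamma'$, not through $\gamma'-\gamma$. Concretely, in type $A_3$ with $I=\{1,2\}$ and $\beta=\alpha_1+\alpha_2+\alpha_3$, take $\alpha_i=\alpha_3$, so $\gamma'=\alpha_1+\alpha_2$; the induction hypothesis may return $\gamma=\alpha_2$, and then $\beta-\gamma=\alpha_1+\alpha_3\notin\Delta$. So it is \emph{not} true that $(\gamma'-\gamma)+\alpha_i$ is always a root, and no amount of string bookkeeping will make it so. What is missing is exactly the dichotomy the paper invokes from \cite[Lemma~1.1(iii)]{Dolbin}: with $a=\gamma'-\gamma$, $b=\gamma$, $c=\alpha_i$ (three positive roots whose sum $\beta$ is a root, and with $a+b=\gamma'$ also a root), one of $a+c=\beta-\gamma$ or $b+c=\gamma+\alpha_i$ is a root. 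In the first case you are done with the original $\gamma$; in the second, $\gamma+\alpha_i\in\Delta_{I,1}$ (since $\height_I(\alpha_i)=0$) and $\beta-(\gamma+\alpha_i)=\gamma'-\gamma\in\Delta^+$, so you are done with the \emph{modified} choice $\gamma+\alpha_i$. In the $A_3$ example this replaces the ``bad'' $\gamma=\alpha_2$ by $\alpha_2+\alpha_3\in\Delta_{I,1}$. Once you supply this lemma (or reprove it---it is a short Killing-form computation in finite type), your argument becomes the paper's proof.
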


\begin{proof}
We reach a contradiction, working with
$\beta$ of least height such that the lemma fails
for $\beta$. Recall, $\beta$ is a sum of
simple roots, and the Killing form on $\mathfrak{h}$, hence on
$\mathfrak{h}^*$, is positive definite, i.e.\ $(x,x)>0$ $\forall$
$x\in\mathfrak{h}^*$. Thus, fix a simple root $\alpha$ such that
$(\beta,\alpha)>0$. Now \cite[Lemma 9.4]{Humphreys} implies
$\beta-\alpha$ is a root. The assumption on $\beta$ forces
$\height_I(\alpha)=0$, so $\height_I(\beta-\alpha)=\height_I(\beta)>1$.
Since $\height(\beta-\alpha)<\height(\beta)$, we have a root
$\eta\in\Delta_{I,1}$ such that $\beta-\alpha-\eta\in\Delta^+$. Now
applying \cite[Lemma 1.1(iii)]{Dolbin} for $\beta-\alpha-\eta$, $\eta$
$\alpha$ (in place of $\alpha,\beta$, $\gamma$, respectively), we have
either: (1)~$(\beta-\alpha-\eta)+(\alpha)=\beta-\eta$ is a root, or (2)
$\alpha+\eta$ is root \big(in which case
$\alpha+\eta\in\Delta_{I,1}$\big). Both of these cases contradict the
choice of $\beta$.
\end{proof}

\section{Minkowski difference formulas for weights}\label{S3}

In this section, we state and discuss our minimal description result for
$\weight L(\lambda)$ for all weights $\lambda$ $\in\mathfrak{h}^*$, and a
Minkowski difference formula for the set of weights of an arbitrary
highest weight $\mathfrak{g}$-module, using the parabolic PSP. We need
the following notation.

For any subset $S\neq \emptyset$ of a real vector space, let
$\mathbb{Z}S$ (or $\mathbb{Z}_{\geq 0}S$) comprise the set of
$\mathbb{Z}$-linear (or $\mathbb{Z}_{\geq 0}$-linear) combinations of
elements of $S$. Let $\conv (S)$ denote the $\mathbb{R}$-convex hull of
$S$. Recall, $\mathcal{I}$ is the set of nodes in the Dynkin diagram of
$\mathfrak{g}$. Set $I^c := \mathcal{I}\setminus I$ for $I\subseteq
\mathcal{I}$. Now for $i \in \mathcal{I}$, let $s_i$ denote the simple
reflection about the hyperplane perpendicular to the simple root
$\alpha_i$. These generate the Weyl group $W$ of $\mathfrak{g}$. Let
$e_i, f_i,\alpha_i^{\vee}$ $\forall$ $i\in\mathcal{I}$ be the Chevalley
generators of $\mathfrak{g}$.

We now fix $\emptyset\neq J\subseteq \mathcal{I}$ and define the
parabolic analogues. Define $\mathfrak{g}_J$ to be the Lie subalgebra of
$\mathfrak{g}$ generated by $e_j,f_j, \alpha_j^{\vee}$ $\forall$ $j\in
J$. Since $\mathfrak{g}$ is of finite or affine type, recall that
$\mathfrak{g}_J$ is always semisimple for $J \subsetneq \mathcal{I}$. The
subalgebra $\mathfrak{g}_J$ corresponds to the Cartan matrix $A_{J\times
J}$. We denote the Cartan subalgebra, root system, the (fixed) simple
roots and simple co-roots of $\mathfrak{g}_J$ by $\mathfrak{h}_J$,
$\Delta_J$, $\Pi_J:=\{\alpha_j\}_{j\in J}$ and $\Pi_J^{\vee}$,
respectively; note that $\Delta_J=\Delta\cap \mathbb{Z}\Pi_J$. The
parabolic subgroup of $W$ generated by the simple reflections
$\{s_j\}_{j\in J}$ corresponding to $J$, is the Weyl group of
$\mathfrak{g}_J$; it is denoted by $W_J$.  

Fix $\lambda\in\mathfrak{h}^*$ for this section. Recall, $\lambda$ is \textit{integral}, respectively \textit{dominant}, if $\langle\lambda,\alpha_i^{\vee}\rangle\in\mathbb{Z}$, respectively $\langle\lambda,\alpha_i^{\vee}\rangle\in\mathbb{R}_{\geq 0}$, for all $i\in\mathcal{I}$; $\lambda$ is \textit{dominant integral} if it is both. For a $\mathfrak{g}$-module $V$, recall the definitions of the $\lambda$-weight space and the set of weights of $V$:
\[ 
V_{\lambda}:=\{v\in M\text{ }|\text{ }h\cdot v= \langle\lambda,h\rangle v\text{ }\forall\text{ }h\in \mathfrak{h}\}\quad\text{and}\quad \weight V:=\{\mu\in\mathfrak{h}^*\text{ }|\text{ }V_{\mu}\neq\{0\}\}.
\]

$V$ is said to be a highest weight module of highest weight $\lambda$, if
there exists a vector $0\neq v\in V$ such that (1) $v$ generates $V$ over
$\mathfrak{g}$, (2) $hv=\langle\lambda,h\rangle v$ $\forall$
$h\in\mathfrak{h}$ (so $v\in V_{\lambda}\neq\{0\}$), and (3) $e_iv=0$
$\forall$ $i\in\mathcal{I}$. We call such a $v$ to be a highest weight
vector of $V$ and it is unique up to scalars. We denote the simple
highest weight module over $\mathfrak{g}_J$ with highest weight $\lambda$
\big(or rather $\lambda$ restricted to $\mathfrak{h}_J$\big) by
$L_J(\lambda)$. In this section, we deal with an important subset $
J_{\lambda}\ :=\ \{j\in \mathcal{I}\ |\
\langle\lambda,\alpha_j^{\vee}\rangle\in \mathbb{Z}_{\geq 0} \}$ of
$\mathcal{I}$ 
-- the \textit{integrability} of $\lambda$ (or of
$L(\lambda)$).

\begin{remark}
$L_{J_{\lambda}}(\lambda)$ is the integrable highest weight
$\mathfrak{g}_{J_{\lambda}}$-module corresponding to $\lambda$; i.e.,
$f_j$ acts nilpotently on each vector in $L_{J_{\lambda}}(\lambda)$
(nilpotently on all of $L_{J_{\lambda}}(\lambda)$ when $\mathfrak{g}$ is
of finite type) $\forall$ $j\in J_{\lambda}$. So, we know well the set
$\weight L_{J_{\lambda}}(\lambda)$. It (or equivalently, the set of nodes
$J_{\lambda}$) determines $\weight L(\lambda)$, as shown by Khare
\cite{KhareJ.Alg} and Dhillon--Khare \cite{KhareAdv}:
\begin{align}\label{Minkowski difference formula for wt L(lambda)}
    \weight L(\lambda)\ =\ & \ \weight L_{J_{\lambda}}(\lambda)\ -\ \mathbb{Z}_{\geq 0}\left(\Delta^+\setminus \Delta_{J_{\lambda}}^+\right).\\
\weight L(\lambda) \ = \ & \ \conv\left(\weight L(\lambda)\right) \cap \left(\lambda-\mathbb{Z}_{\geq 0}\Pi\right).\label{Bump's question}
\end{align}
\end{remark}

\noindent Observe by formula \eqref{Minkowski difference formula for wt
L(lambda)} that $\conv (\weight L(\lambda))$ equals the Minkowski
difference between $\conv\big(\weight L_{J_{\lambda}}(\lambda)\big)$ and
the real cone $\mathbb{R}_{\geq 0}\big(\Delta^+\setminus
\Delta^+_{J_{\lambda}}\big)$. Formula \eqref{Bump's question}
affirmatively answers a question of Daniel Bump on recovering weights
from their convex hulls for these simple $\mathfrak{g}$-modules. Formula
\eqref{Minkowski difference formula for wt L(lambda)} is the Minkowski
difference formula mentioned in the introduction, expressing $\weight
L(\lambda)$ in terms of two well-understood subsets. The parabolic
partial sum property was posed in order to determine the minimal
generators for the (non-negative integer) cone in \eqref{Minkowski
difference formula for wt L(lambda)}, which is generated by all the
positive roots outside $\Delta^+_{J_{\lambda}}$; these are found in the
next theorem. Recall the definition of $\Delta_{J_{\lambda}^c, \ 1}$ from
\eqref{unit I-height roots}. 

\begin{theorem}
The cone $\mathbb{Z}_{\geq 0}
\left(\Delta^+\setminus\Delta_{J_{\lambda}}^+\right)$ is minimally
generated (over $\mathbb{Z}_{\geq 0}$) by $\Delta_{J_{\lambda}^c, \ 1}$,
which is always finite if $\Delta$ is of finite or affine type.
Therefore, we have the following minimal description:
\begin{equation}
\weight L(\lambda)\ =\ \weight L_{J_{\lambda}}(\lambda)-\mathbb{Z}_{\geq
0}\Delta_{J_{\lambda}^c,\ 1}, \qquad \forall \lambda \in \mathfrak{h}^*.
\end{equation}
\end{theorem}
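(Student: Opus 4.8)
The plan is to establish two inclusions of cones and then a minimality statement. First I would show $\mathbb{Z}_{\geq 0}\left(\Delta^+\setminus\Delta^+_{J_\lambda}\right) = \mathbb{Z}_{\geq 0}\Delta_{J_\lambda^c,\,1}$. The inclusion $\supseteq$ is immediate, since $\Delta_{J_\lambda^c,\,1}\subseteq \Delta^+$ and any $\gamma\in\Delta_{J_\lambda^c,\,1}$ has $\height_{J_\lambda^c}(\gamma)=1>0$, hence $\gamma\notin\mathbb{Z}\Pi_{J_\lambda}$, i.e.\ $\gamma\in\Delta^+\setminus\Delta^+_{J_\lambda}$. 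For the reverse inclusion, take any $\beta\in\Delta^+\setminus\Delta^+_{J_\lambda}$; since $\beta$ is a positive root not lying in $\mathbb{Z}\Pi_{J_\lambda}$, it must involve some simple root from $J_\lambda^c$, so $m:=\height_{J_\lambda^c}(\beta)>0$. Apply the parabolic partial sum property (Theorem~\ref{P-psp}) with $I=J_\lambda^c$: this writes $\beta=\sum_{j=1}^m\gamma_j$ with each $\gamma_j\in\Delta_{J_\lambda^c,\,1}$, so $\beta\in\mathbb{Z}_{\geq 0}\Delta_{J_\lambda^c,\,1}$. This proves the two cones coincide, and substituting into \eqref{Minkowski difference formula for wt L(lambda)} yields the claimed minimal description formula.

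Next I would prove \emph{minimality}: no proper subset of $\Delta_{J_\lambda^c,\,1}$ generates the same cone. The key observation is that each $\gamma\in\Delta_{J_\lambda^c,\,1}$ has $\height_{J_\lambda^c}(\gamma)=1$, so $\height_{J_\lambda^c}$ is additive and takes the value $1$ exactly on $\Delta_{J_\lambda^c,\,1}$ and larger values on nontrivial $\mathbb{Z}_{\geq 0}$-combinations of two or more such elements. Hence if $\gamma=\sum_k c_k\delta_k$ with $\delta_k\in\Delta_{J_\lambda^c,\,1}$, $c_k\in\mathbb{Z}_{\geq 0}$, then applying $\height_{J_\lambda^c}$ gives $1=\sum_k c_k$, forcing exactly one $c_k=1$ and the rest zero, so $\gamma=\delta_k$ for that $k$. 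Thus $\gamma$ cannot be expressed using the others, proving each element of $\Delta_{J_\lambda^c,\,1}$ is indispensable, i.e.\ the generating set is minimal (and in fact every generating set of the cone contains $\Delta_{J_\lambda^c,\,1}$, since each such $\gamma$ lies on an extreme ray).

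Finally I would address \emph{finiteness} of $\Delta_{J_\lambda^c,\,1}$ in finite and affine type. In finite type this is trivial since $\Delta$ itself is finite. In affine type, $\Delta=\Delta^{\mathrm{re}}\sqcup\Delta^{\mathrm{im}}$, and the imaginary roots are $\mathbb{Z}_{\neq 0}\delta$ where $\delta$ is the null root; writing $\delta=\sum_i a_i\alpha_i$ with all $a_i>0$, for $n\delta$ with $|n|\geq 2$ we get $\height_{J_\lambda^c}(n\delta)=n\sum_{i\in J_\lambda^c}a_i$, which has absolute value $\geq 2\sum_{i\in J_\lambda^c}a_i\geq 2$ (note $J_\lambda^c\neq\emptyset$ whenever $\Delta^+\setminus\Delta^+_{J_\lambda}$ is nonempty), so at most $\pm\delta$ could have $J_\lambda^c$-height $1$, and only if $\sum_{i\in J_\lambda^c}a_i=1$. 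For real roots, one uses that $\Delta^{\mathrm{re}}$ is $W$-orbit of the finitely many simple roots, but more directly: the real roots of an affine algebra are $\{\alpha+n\delta : \alpha\in\Delta_{\mathrm{fin}},\ n\in\mathbb{Z}\}$ (suitably interpreted), and $\height_{J_\lambda^c}(\alpha+n\delta)=\height_{J_\lambda^c}(\alpha)+n\sum_{i\in J_\lambda^c}a_i$; since $\sum_{i\in J_\lambda^c}a_i\geq 1$ and $\height_{J_\lambda^c}(\alpha)$ ranges over a finite set, the equation $\height_{J_\lambda^c}(\alpha+n\delta)=1$ has only finitely many solutions $n$ for each $\alpha$, hence finitely many roots total. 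I expect the finiteness argument in the affine case to be the main obstacle requiring care, since it needs the explicit structure of affine root systems; the cone-equality and minimality parts are short once the parabolic PSP is in hand.
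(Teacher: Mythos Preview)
Your proposal is correct and follows essentially the same approach as the paper: both establish the cone equality via the parabolic PSP applied with $I=J_\lambda^c$, and both prove minimality by comparing $J_\lambda^c$-heights to force any expression of $\gamma\in\Delta_{J_\lambda^c,1}$ as a $\mathbb{Z}_{\geq 0}$-combination to be trivial. You additionally supply the finiteness argument in the affine case (which the paper asserts but does not prove), and your sketch there is sound, with the caveat that the description $\{\alpha+n\delta\}$ must indeed be ``suitably interpreted'' for twisted types---but since the actual real roots form a subset of this, your finiteness conclusion is unaffected.
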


\begin{proof}[Sketch of proof]
Notice that $\Delta_{J_{\lambda}^c,\ 1}\subseteq \Delta^+\setminus
\Delta_{J_{\lambda}}^+$. Conversely, if $\beta\in
\Delta^+\setminus\Delta^+_{J_{\lambda}}$, then
$\height_{J_{\lambda}^c}(\beta)>0$. Now the partial sum property implies
$\beta\in \mathbb{Z}_{\geq 0}\Delta_{J_{\lambda}^c,\ 1}$. Therefore,
$\mathbb{Z}_{\geq 0}\left(\Delta^+\setminus
\Delta_{J_{\lambda}}^+\right)=\mathbb{Z}_{\geq 0}\Delta_{J_{\lambda}^c, \
1}$. It remains to show the minimality -- or irredundancy --
of $\Delta_{J_{\lambda}^c,\ 1}$. Suppose there is a root
$\gamma\in\Delta_{J_{\lambda}^c,\ 1}$ such that
$\gamma=\sum_{i=1}^n\gamma_i$ for some roots $\gamma_1,\ldots,\gamma_n\in
\Delta^+\setminus \Delta_{J_{\lambda}}^+$. Comparing the
$J_{\lambda}^c$-heights on both sides, it follows that $n=1$ and
$\gamma_1=\gamma$.
\end{proof}

Next, we give our formula for the weights of an arbitrary highest weight
$\mathfrak{g}$-module $V$ of highest weight $\lambda\in\mathfrak{h}^*$.
We define for convenience $\weight_{J_{\lambda}} V:=
(\lambda-\mathbb{Z}_{\geq 0}\Pi_{J_{\lambda}})\cap \weight V$, which is
the set of weights of the $\mathfrak{g}_{J_{\lambda}}$-module generated
by a highest weight vector in $V$.

\begin{theorem}\label{wt V formula theorem}
Let $\lambda\in\mathfrak{h}^*$, and $V$ be a highest weight $\mathfrak{g}$-module with highest weight $\lambda$. Then
\begin{equation}\label{Minkowski difference formula for wt V}
\weight V\ =\ \weight _{J_{\lambda}}V-\mathbb{Z}_{\geq 0}\Delta_{J_{\lambda}^c,\ 1}\ =\ \weight _{J_{\lambda}}V-\mathbb{Z}_{\geq 0}(\Delta^+\setminus \Delta_{J_{\lambda}}^+).
\end{equation}
\end{theorem}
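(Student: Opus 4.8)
The plan is to prove the two equalities in \eqref{Minkowski difference formula for wt V} by a double inclusion, reducing the statement to the already-established machinery: the Minkowski difference formula \eqref{Minkowski difference formula for wt L(lambda)} for simple modules, the parabolic PSP (Theorem~\ref{P-psp}), and the identification $\mathbb{Z}_{\geq 0}\Delta_{J_{\lambda}^c,\,1}=\mathbb{Z}_{\geq 0}(\Delta^+\setminus\Delta_{J_{\lambda}}^+)$, which was shown in the sketch preceding Theorem~\ref{wt V formula theorem}. Since that last identity gives the second equality for free, the real content is the first equality, $\weight V=\weight_{J_{\lambda}}V-\mathbb{Z}_{\geq 0}\Delta_{J_{\lambda}^c,\,1}$.

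First I would set up the framework: let $v_\lambda$ be a highest weight vector of $V$, and observe that $\weight V\subseteq\lambda-\mathbb{Z}_{\geq 0}\Pi$ because $V$ is generated by $v_\lambda$ under the $f_i$. The inclusion $\weight V\subseteq\weight_{J_{\lambda}}V-\mathbb{Z}_{\geq 0}\Delta_{J_{\lambda}^c,\,1}$ is the heart of the argument. The natural approach is first to prove it for $V=M(\lambda)$, the Verma module, and then deduce the general case. For $M(\lambda)$: a weight $\mu=\lambda-\sum_{i}c_i\alpha_i\in\weight M(\lambda)$ is simply any element of $\lambda-\mathbb{Z}_{\geq 0}\Pi$. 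Write $\mu=\lambda-\xi-\eta$ where $\xi=\sum_{j\in J_{\lambda}}c_j\alpha_j$ and $\eta=\sum_{k\in J_{\lambda}^c}c_k\alpha_k$. Then $\lambda-\xi\in\weight_{J_{\lambda}}M(\lambda)$ trivially, and $\eta\in\mathbb{Z}_{\geq 0}\Pi_{J_{\lambda}^c}\subseteq\mathbb{Z}_{\geq 0}(\Delta^+\setminus\Delta_{J_{\lambda}}^+)=\mathbb{Z}_{\geq 0}\Delta_{J_{\lambda}^c,\,1}$ since each $\alpha_k$ for $k\in J_{\lambda}^c$ is itself a root with $\height_{J_{\lambda}^c}=1$; this gives the claimed inclusion for Verma modules, and the reverse inclusion $\weight_{J_{\lambda}}M(\lambda)-\mathbb{Z}_{\geq 0}\Delta_{J_{\lambda}^c,\,1}\subseteq\lambda-\mathbb{Z}_{\geq 0}\Pi=\weight M(\lambda)$ is immediate. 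For the general highest weight module $V$, the upper bound $\weight V\subseteq\weight M(\lambda)$ gives one half of what is needed once combined with the lower bound; more precisely, I expect that $\weight_{J_{\lambda}}V=\weight_{J_{\lambda}}M(\lambda)$ in fact need \emph{not} hold, so one must be more careful. Instead, the cleanest route is to sandwich $V$: one has surjections $M(\lambda)\twoheadrightarrow V\twoheadrightarrow L(\lambda)$, hence $\weight L(\lambda)\subseteq\weight V\subseteq\weight M(\lambda)$, and correspondingly $\weight_{J_{\lambda}}L(\lambda)\subseteq\weight_{J_{\lambda}}V\subseteq\weight_{J_{\lambda}}M(\lambda)$.

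With the sandwich in place, the inclusion ``$\supseteq$'' is easy: $\weight_{J_{\lambda}}V\subseteq\weight V$ and $\weight V$ is closed under subtracting elements of $\Delta^+\setminus\Delta_{J_{\lambda}}^+$ — this is the standard fact that if $\mu\in\weight V$ and $\beta\in\Delta^+$ with $\mu-\beta\in\lambda-\mathbb{Z}_{\geq0}\Pi$, then actually $\mu-\beta\in\weight V$ whenever... no: rather, I would argue directly that $\weight_{J_{\lambda}}V-\mathbb{Z}_{\geq0}\Delta_{J_\lambda^c,1}\subseteq\weight M(\lambda)=\lambda-\mathbb{Z}_{\geq0}\Pi$ and intersect appropriately — but cleanest is: by \eqref{Minkowski difference formula for wt L(lambda)}, $\weight L(\lambda)=\weight_{J_\lambda}L(\lambda)-\mathbb{Z}_{\geq0}\Delta_{J_\lambda^c,1}\subseteq\weight_{J_\lambda}V-\mathbb{Z}_{\geq0}\Delta_{J_\lambda^c,1}$, and any such difference $\nu-\zeta$ with $\nu\in\weight_{J_\lambda}V\subseteq\weight V$ lies in $\weight V$ because one can realize it inside the $\mathfrak{g}_{J_\lambda}$-submodule-plus-lowering argument: pick $v\in V_\nu\setminus\{0\}$; since $\zeta\in\mathbb{Z}_{\geq0}\Delta_{J_\lambda^c,1}$, Theorem~\ref{P-psp} lets us write $\zeta$ as an ordered sum of elements of $\Delta_{J_\lambda^c,1}$ with partial sums roots, and successively applying root vectors $f_{\gamma_t}\in\mathfrak{g}_{-\gamma_t}$ — which exist and act, possibly after noting nonvanishing via an $\mathfrak{sl}_2$ or PBW argument inside $M(\lambda)\twoheadrightarrow V$ — produces a nonzero vector of weight $\nu-\zeta$. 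The reverse inclusion ``$\subseteq$'' follows from $\weight V\subseteq\weight M(\lambda)$: write $\mu\in\weight V$ as $\lambda-\xi-\eta$ as above, so $\mu\in(\lambda-\xi)-\mathbb{Z}_{\geq0}\Pi_{J_\lambda^c}$; the point now is that $\lambda-\xi$ lies in $\weight_{J_\lambda}V$, not merely $\weight_{J_\lambda}M(\lambda)$ — and this is exactly where the hard work is.

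The main obstacle, as I see it, is precisely establishing that the ``$J_{\lambda}$-shadow'' $\lambda-\xi$ of any weight $\mu=\lambda-\xi-\eta\in\weight V$ is itself a weight of $V$, i.e.\ that $\weight V\subseteq\weight_{J_\lambda}V-\mathbb{Z}_{\geq0}\Pi_{J_\lambda^c}$. This does not follow formally from the sandwich, since $\weight_{J_\lambda}V$ can be strictly smaller than $\weight_{J_\lambda}M(\lambda)$ (integrability in the $J_\lambda$-directions is imposed), and it can fail to see $\lambda-\xi$ a priori. The resolution should invoke the integrability structure: $L_{J_\lambda}(\lambda)$ is the integrable $\mathfrak{g}_{J_\lambda}$-module, $\weight_{J_\lambda}V$ is sandwiched between $\weight L_{J_\lambda}(\lambda)$ and $\weight M_{J_\lambda}(\lambda)$, and — crucially — the $\mathfrak{g}_{J_\lambda}$-module generated by $v_\lambda$ in $V$ is a highest weight $\mathfrak{g}_{J_\lambda}$-module whose weights, being $W_{J_\lambda}$-stable (the simple reflections $s_j$, $j\in J_\lambda$, act on $V$ by integrability of the $f_j$), must be ``saturated'' downward in the $\Pi_{J_\lambda}$-directions along dominant chambers; combined with a reduction showing every $\mu\in\weight V$ can be moved by $W_{J_\lambda}$ into a region where the $\Pi_{J_\lambda^c}$-part is ``peeled off'' last, this yields $\lambda-\xi\in\weight_{J_\lambda}V$. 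I would expect the clean statement to be: the convexity/saturation of $\weight_{J_\lambda}V$ together with formula \eqref{Minkowski difference formula for wt L(lambda)} applied to both $V$ and the corresponding $\mathfrak{g}_{J_\lambda}$-picture closes the loop, and this saturation lemma for the $J_\lambda$-restriction is the one real input beyond the parabolic PSP and the simple-module formula.
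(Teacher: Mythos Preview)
Your plan for ``$\subseteq$'' has a genuine gap: the decomposition $\mu=(\lambda-\xi)-\eta$ with $\xi$ the $\Pi_{J_\lambda}$-part and $\eta$ the $\Pi_{J_\lambda^c}$-part of $\lambda-\mu$ is the wrong one, and the ``saturation/integrability'' fix you propose cannot rescue it, because the conclusion you are aiming for ($\lambda-\xi\in\weight_{J_\lambda}V$) is simply false in general. Concretely, take $\mathfrak{g}=\mathfrak{sl}_3$, $\langle\lambda,\alpha_1^\vee\rangle=0$, $\langle\lambda,\alpha_2^\vee\rangle\notin\mathbb{Z}_{\geq 0}$, and $V=L(\lambda)$. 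Then $J_\lambda=\{1\}$, $f_1v_\lambda=0$, so $\weight_{J_\lambda}V=\{\lambda\}$; yet $\mu=\lambda-\alpha_1-\alpha_2\in\weight V$ (since $f_1f_2v_\lambda\neq 0$), while your $\lambda-\xi=\lambda-\alpha_1$ is \emph{not} a weight of $V$ at all. The correct way to write $\mu$ here is $\mu=\lambda-(\alpha_1+\alpha_2)$ with $\alpha_1+\alpha_2\in\Delta_{J_\lambda^c,1}$, and this is exactly what the PBW argument produces: decompose $\mathfrak{n}^-=\mathfrak{n}^-_{J_\lambda}\oplus\mathfrak{m}^-$ with $\mathfrak{m}^-=\bigoplus_{\beta\in\Delta^+\setminus\Delta_{J_\lambda}^+}\mathfrak{g}_{-\beta}$, so $U(\mathfrak{n}^-)=U(\mathfrak{m}^-)\cdot U(\mathfrak{n}^-_{J_\lambda})$; then any nonzero weight vector is a combination of monomials $F\cdot G\cdot v_\lambda$ with $G\in U(\mathfrak{n}^-_{J_\lambda})$ and $F\in U(\mathfrak{m}^-)$, giving $\mu\in\weight_{J_\lambda}V-\mathbb{Z}_{\geq 0}(\Delta^+\setminus\Delta^+_{J_\lambda})$. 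This is precisely the paper's route (its cited Corollary~3.2(b)), and it shows you have the difficulty backwards: ``$\subseteq$'' is the easy direction once you use PBW correctly.

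The direction that actually needs work is ``$\supseteq$'', and here your sketch is closer in spirit to the paper but still incomplete. Your suggestion to ``successively apply $f_{\gamma_t}$'' and hope for nonvanishing via an unspecified $\mathfrak{sl}_2$/PBW argument is the right instinct, but the nonvanishing is not automatic: $f_\gamma$ can certainly annihilate a nonzero weight vector. The paper proceeds by first establishing the base case $\weight_{J_\lambda}V-\mathbb{Z}_{\geq 0}\Pi_{J_\lambda^c}\subseteq\weight V$ (the cited Lemma~4.2, which uses that $f_k$ for $k\in J_\lambda^c$ is \emph{not} locally nilpotent on $V$), and then inducts on $\height_{J_\lambda^c}\big(\sum_i\gamma_i\big)$, where working with the generators $\Delta_{J_\lambda^c,1}$ rather than all of $\Delta^+\setminus\Delta_{J_\lambda}^+$ is what makes the induction step tractable. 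Your appeal to the formula for $L(\lambda)$ and the sandwich $L(\lambda)\twoheadleftarrow V\twoheadleftarrow M(\lambda)$ does not close this gap, because $\weight_{J_\lambda}V$ can be strictly larger than $\weight_{J_\lambda}L(\lambda)$.
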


\begin{remark}
When $V=L(\lambda)$, \eqref{Minkowski difference formula for wt V} is the same as the formula \eqref{Minkowski difference formula for wt L(lambda)} by Dhillon and Khare, as $\weight_{J_{\lambda}}L(\lambda)=\weight L_{J_{\lambda}}(\lambda)$.
The significance of the formula \eqref{Minkowski difference formula for wt V} is as follows:

\noindent The cone in it is well understood, and the unknown part is
$\weight_{J_{\lambda}}V$. 
In view of this, if we find the sets of weights of highest weight $\mathfrak{g}$-modules with dominant integral highest weights, then we have the formulas for weights of all highest weight $\mathfrak{g}$-modules. 
\end{remark}

\begin{proof}[Sketch of proof of Theorem \ref{wt V formula theorem}]
The inclusion $\weight V\subseteq \weight
_{J_{\lambda}}V-\mathbb{Z}_{\geq 0}\Delta_{J_{\lambda}^c,\ 1}$ follows by
\cite[Corollary 3.2(b)]{MBWWM}, which involves the
Poincar\'{e}--Birkhoff--Witt theorem and the parabolic PSP.
To show the reverse inclusion in the previous
sentence, recall that \cite[Lemma 4.2]{MBWWM} says $\weight
_{J_{\lambda}}V-\mathbb{Z}_{\geq 0}\Pi_{J_{\lambda}^c}\subseteq \weight
V$. Using this inclusion as the base step, for any
$\gamma_1,\ldots,\gamma_n\in\Delta_{J_{\lambda}^c,\ 1}$, we induct on
$\height_{ J_{\lambda}^c}\left(\sum_{i=1}^n\gamma_i\right)$ to prove:
$\weight_{J_{\lambda}}V-\mathbb{Z}_{\geq 0}\sum_{i=1}^n\gamma_i\
\subseteq \weight V$, which finishes the proof. Showing this crucial step
is tremendously simplified if one works with the generating
set $\Delta_{J_{\lambda}^c,\ 1}$ for $\mathbb{Z}_{\geq
0}\left(\Delta^+\setminus \Delta^+_{J_{\lambda}}\right)$ instead of
$\Delta^+\setminus \Delta^+_{J_{\lambda}}$. 
\end{proof}

\section{Weak faces and 212-closed subsets}

Recall the definitions of weak faces and 212-closed subsets from
Definition \ref{weak face defiinition}. Throughout, $\mathfrak{g}$ (or
equivalently, $\Delta$) is of finite or affine type, $\mathbb{A}\neq
\{0\}$ is a fixed arbitrary additive subgroup of $\mathbb{R}$, and $V$ is
an arbitrary highest weight $\mathfrak{g}$-module of highest weight
$\lambda\in\mathfrak{h}^*$. The symbol $Y$ always denotes a weak face or
a 212-closed subset of $X$, where $X=\Delta$ or $\Delta\sqcup\{0\}$ or
$\weight V$ or $\conv(\weight V)$. The main results of this section are
as follows. Theorem \ref{weak faces of roots main theorem} completely
determines as well as discusses the equivalence of the 212-closed subsets
and weak faces both for $X=\Delta$ and $X=\Delta\sqcup\{0\}$. It turns
out that $X=\Delta$ of types $A_2$ or (affine) $\widehat{A_2}$ are the
only two exceptions for which these two classes of subsets are not equal.
In Theorem \ref{weak faces of weights main theorem}: 1) we show for
$X=\weight V$ that these two classes of subsets are the same as the sets
of weights falling on the faces of $\conv(\weight V)$, and 2) for
$X=\conv(\weight V)$ these two notions are the same as the usual faces.
Let us first understand a geometric interpretation of 212-closedness.

\begin{remark}\label{geometric interpretation of 212-closedness}
In the definition of a 212-closed set, consider the equality $(y_1) +
(y_2) = (x_1) + (x_2)$ for $y_1,y_2\in Y$ and $x_1,x_2\in X$ implying
$x_1,x_2\in Y$. This equality arises notably when: (1) $y_1=y_2$ is the
midpoint between $x_1$ and $x_2$. (2) $x_1=x_2$ is the midpoint between
$y_1$ and $y_2$. (3) $y_1, y_2, x_1, x_2$ form vertices of a
parallelogram with $y_1, y_2$ (similarly, $x_1, x_2$) on a diagonal.
\end{remark}

Using this remark, we now look at the 212-closed subsets of $\Delta$ for
the example below:

\begin{example}\label{212-closed subsets in type A_2 example}
Let $\Delta$ be of type $A_2$, with the simple roots $\alpha_1$ and
$\alpha_2$. $\Delta=\{\pm \alpha_1,\ \pm\alpha_2$,
$\pm(\alpha_1+\alpha_2) \}$ is the set of vertices of the hexagon in
Figure \eqref{A_2 root system} below, where $\alpha_1=(1,0)$ and
$\alpha_2=(-1/2, \sqrt{3}/2)$. The figure in $\eqref{A_2 root polytope}$
is the $\mathbb{R}$-convex hull of $\Delta$.
\begin{figure}[h]
\centering
\begin{subfigure}{.4\linewidth}
\begin{tikzpicture}[scale=1.5]
\draw
(cos 0, sin 0) --
(cos 60, sin 60) --
(cos 120, sin 120) --
(cos 180, sin 180) --
(cos 240, sin 240) --
(cos 300, sin 300) --
cycle;
\filldraw[black](cos 0,sin 0) circle (2pt) node[anchor=west]{$(1,0)$};
\filldraw[black](cos 60,sin 60) circle (2pt) node[anchor=west]{$\left(\frac{1}{2},\frac{\sqrt{3}}{2}\right)$};
\draw (0,0) node{(0,0)};
\filldraw[black](cos 120,sin 120) circle (2pt) node[anchor=east]{$\left(-\frac{1}{2},\frac{\sqrt{3}}{2}\right)$\hspace{3pt}};
\filldraw[black](cos 180,sin 180) circle (2pt) node[anchor=east]{$(-1,0)$};
\filldraw[black](cos 240,sin 240) circle (2pt) node[anchor=east]{$\left(-\frac{1}{2},-\frac{\sqrt{3}}{2}\right)$};
\filldraw[black](cos 300,sin 300) circle (2pt) node[anchor=west]{$\left(\frac{1}{2},-\frac{\sqrt{3}}{2}\right)$};
\end{tikzpicture}
\caption{Type $A_2$ root system}\label{A_2 root system}
\end{subfigure}
\begin{subfigure}{.4\linewidth}
\begin{tikzpicture}[scale=1.5]
\filldraw[fill=blue!70]
(cos 0, sin 0) --
(cos 60, sin 60) --
(cos 120, sin 120) --
(cos 180, sin 180) --
(cos 240, sin 240) --
(cos 300, sin 300) --
cycle;
\filldraw[black](cos 0,sin 0) circle (2pt) node[anchor=west]{$\alpha_1$};
\filldraw[black](cos 60,sin 60) circle (2pt) node[anchor=west]{$\alpha_1+\alpha_2$};
\draw (0,0) node{(0,0)};
\filldraw[black](cos 120,sin 120) circle (2pt) node[anchor=east]{$\alpha_2$};
\filldraw[black](cos 180,sin 180) circle (2pt) node[anchor=east]{$-\alpha_1$};
\filldraw[black](cos 240,sin 240) circle (2pt) node[anchor=east]{$-\alpha_1-\alpha_2$};
\filldraw[black](cos 300,sin 300) circle (2pt) node[anchor=west]{$-\alpha_2$};
\end{tikzpicture}
\caption{Type $A_2$ root polytope}\label{A_2 root polytope}
\end{subfigure}
\caption{The $A_2$ case}
\end{figure}
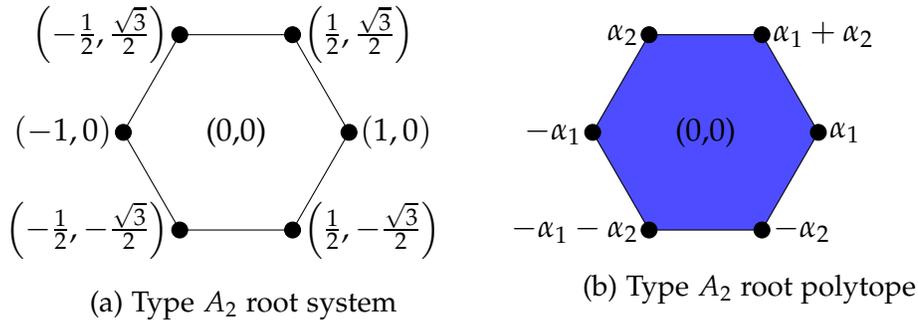

$\Delta$ is 212-closed in itself trivially. By Remark \ref{geometric
interpretation of 212-closedness}(1), since no root lies between
two others, all singletons in $\Delta$ are 212-closed.
By Remark \ref{geometric
interpretation of 212-closedness} the following are all the rest of the 212-closed subsets in $\Delta$:
(i) non-antipodal pairs of points;
(ii) any subset of 3 vertices, such that in their induced subgraph none
of them are isolated, or all of them are isolated.
Next about the 212-closed subsets of $\Delta\sqcup\{0\}$, once again by
Remark \ref{geometric
interpretation of 212-closedness} points (1) and (3), if $Y$ 212-closed in $\Delta\sqcup\{0\}$ is such that $Y$ contains $0$ or a
pair of non-adjacent vertices, then $Y=\Delta\sqcup\{0\}$. 
Theorem \ref{weak faces of weights main theorem}(b), and the formulas
given for the faces and for weights falling on the faces \eqref{standard
face}, can be verified/understood with the aid of the convex set in
Figure \eqref{A_2 root polytope}, which equals
$\conv(\weight L(\alpha_1+\alpha_2))$.      

\end{example}
We now generalize below some of the notable observations in this example.
\begin{note}\label{all 212-closed sets are proper}
 For $X=\Delta$ or $\Delta\sqcup\{0\}$, throughout, every weak face and
 every 212-closed subset $Y$ of $X$ will be assumed to be a proper subset
 of $X$. This is to overcome the obstruction to our uniform comparisons
 of these notions: $\Delta$ is 212-closed (also a weak-$\mathbb{A}$-face
 for any $\mathbb{A}$) in $\Delta$, but not in $\Delta\sqcup\{0\}$, since
 if $Y=\Delta$ is 212-closed in $\Delta\sqcup\{0\}$, then
\begin{equation}\label{Delta is not 212-closed in Delta U 0}
    \text{for any }\xi\in Y=\Delta,\quad (\xi)+(-\xi)=2(0)\quad \implies 0\in \Delta \ \Rightarrow\!\Leftarrow. 
\end{equation}
\end{note}

\begin{remark}\label{0 not in 212-closed subsets}
Let $Y$ be 212-closed in $X=\Delta$ or $\Delta\sqcup\{0\}$. Since
$Y\subsetneqq X$, observe via the reversed equation before the
implication in \eqref{Delta is not 212-closed in Delta U 0} that $0\notin
Y$. This has two further implications:
(1) For any root $\xi$, both $\xi$ and $-\xi$ cannot belong to $Y$.
(2) Every 212-closed subset of $\Delta\sqcup\{0\}$ is contained in
$\Delta$, and therefore (by the definition) it is 212-closed in $\Delta$.
These assertions hold equally well for weak faces, since they are
212-closed.  
\end{remark}

Let $\mathfrak{g}$ be of finite type, with highest long root $\theta$. As
$\mathfrak{g}$ and $L(\theta)$ are isomorphic,~$\weight \mathfrak{g}$ =
$\Delta\sqcup\{0\}=\weight L(\theta)$. So \cite[Theorem C]{KhareJ.Alg},
which finds and shows the equivalence of all the weak faces and
212-closed subsets of $X=\weight L(\lambda)$ $\forall$
$\lambda\in\mathfrak{h}^*$, applied for $\lambda=\theta$~yields:

\begin{proposition}
For $\mathfrak{g}$ of finite type, the following classes of subsets are
all the same: 1) 212-closed subsets of $\Delta\sqcup\{0\}$, 2) weak faces
of $\Delta\sqcup\{0\}$, 3) the sets of weights falling on the faces of
$\conv(\Delta\sqcup\{0\})$, 4) maximizer subsets for linear functionals
on $\Delta\sqcup\{0\}$. These are precisely: 
\begin{equation}\label{standard face}
w\left[\big(\theta-\mathbb{Z}_{\geq 0}\Pi_I\big)\cap \weight
L(\theta)\right]\quad\quad \text{for all }w\in W\text{ and }I\subsetneqq
\mathcal{I}.
\end{equation}
\end{proposition}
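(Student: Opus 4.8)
The plan is to obtain this proposition as a direct specialization, to the single dominant integral weight $\lambda=\theta$, of Khare's classification \cite[Theorem C]{KhareJ.Alg} of the weak faces and 212-closed subsets of $\weight L(\lambda)$ for arbitrary $\lambda\in\mathfrak{h}^*$. The key identification that makes this work is the following: since $\mathfrak{g}$ is finite-dimensional simple, its adjoint representation is simple of highest weight the highest long root $\theta$, which is dominant integral; hence $\mathfrak{g}\cong L(\theta)$ as $\mathfrak{g}$-modules, so $X=\Delta\sqcup\{0\}=\weight\mathfrak{g}=\weight L(\theta)$ and $J_\theta=\mathcal{I}$.

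With this in hand, I would invoke \cite[Theorem C]{KhareJ.Alg} at $\lambda=\theta$: it says that the 212-closed subsets of $\weight L(\theta)$ coincide with its weak faces, and that both classes are exactly the family $w\bigl[(\theta-\mathbb{Z}_{\geq 0}\Pi_I)\cap\weight L(\theta)\bigr]$ for $w\in W$ and $I\subsetneqq\mathcal{I}$, equivalently the sets of weights lying on the (nonempty, proper) faces of the root polytope $\conv(X)$. This already yields the coincidence of classes (1), (2), (3) together with the displayed description; the standing convention that $Y$ be a \emph{proper} subset of $X$ (Note~\ref{all 212-closed sets are proper}) corresponds precisely to the restriction $I\subsetneqq\mathcal{I}$, since $I=\mathcal{I}$ would return $X$ itself.

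It then remains to incorporate class (4). For one inclusion, any set of weights lying on a proper face $F$ of $\conv(X)$ is a maximizer subset on $X$ for a linear functional supporting $F$: its maximum over $\conv(X)\supseteq X$ is attained exactly on $F$. For the reverse, Remark~\ref{weak face is 212-closed} gives that every maximizer subset of a linear functional on $X$ is a weak-$\mathbb{R}$-face, hence a weak face. Chaining these with the previous paragraph, all four classes coincide and equal the displayed family.

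The single nontrivial input is \cite[Theorem C]{KhareJ.Alg} itself, which I would cite as a black box; consequently the only thing left to verify is a bookkeeping point rather than a theorem --- namely that, for the \emph{non-regular} dominant integral weight $\lambda=\theta$, the index set in that theorem is exactly ``$w\in W$, $I\subsetneqq\mathcal{I}$'' with no further restriction. This is routine: $\weight L(\theta)$ is a saturated weight set, so for each $I\subsetneqq\mathcal{I}$ the slice $(\theta-\mathbb{Z}_{\geq 0}\Pi_I)\cap\weight L(\theta)$ is the weight set of the integrable $\mathfrak{g}_I$-module generated by the highest weight vector and lies on the face of $\conv(\weight L(\theta))$ supported by the dominant functional that vanishes exactly on $\Pi_I$; letting $w$ range over $W$ (with the expected redundancies) then exhausts all faces of $\conv(X)$. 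I expect this bookkeeping to be the only mild obstacle; everything else is formal.
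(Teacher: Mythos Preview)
Your proposal is correct and follows essentially the same approach as the paper: the paper derives the proposition directly from the identification $\Delta\sqcup\{0\}=\weight L(\theta)$ (via $\mathfrak{g}\cong L(\theta)$) and then applies \cite[Theorem~C]{KhareJ.Alg} at $\lambda=\theta$, with the link to maximizer subsets handled in the subsequent remark. The only minor difference is that the paper is explicit about the maximizing functional, namely $\bigl(w\sum_{j\in I^c}\omega_j,\ -\bigr)$, whereas you invoke an unspecified supporting functional; this is cosmetic.
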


\begin{remark}
Recall, the convex hulls of the subsets in \eqref{standard face} are all
the faces of the root polytope $\conv(\Delta)$ from Borel--Tits
\cite{Borel}, Satake \cite{Satake}, Vinberg \cite{Vinberg}; for instance
see \cite[Theorem 2.17]{KhareJ.Alg} which quotes this result. Let
$\omega_i$ $\forall$ $i\in\mathcal{I}$ be the fundamental dominant
integral weights in $\mathfrak{h}^*$. The set in \eqref{standard face}
maximizes the linear functional $\left(w\sum_{j\in I^c} \omega_j\
,-\right)$. So by Remark \ref{weak face is 212-closed}, it is a
weak-$\mathbb{A}$-face (also 212-closed) in $\Delta\sqcup\{0\}$ for every
$\emptyset\neq \mathbb{A}\subseteq (\mathbb{R},+)$.
\end{remark}

Now assume that $\Delta$ is of affine type and
$\mathcal{I}=\{0,1,\ldots,\ell\}$, where $\ell$ is the rank of $A$ or
$\Delta$. A root $\beta$ is real if $(\beta,\beta)>0$, and imaginary if
$(\beta,\beta)=0$; every root is either real or imaginary. $W\Pi$ (the
$W$ orbit of all the simple roots) are all the real roots. The imaginary
roots are $\{n\delta\ |\ n\in\mathbb{Z}\setminus\{0\}\}$, where $\delta$
is the smallest positive imaginary root. In a 212-closed subset of
$\Delta$, every root is  real by the remark below. 

\begin{remark}
Let $Y$ be 212-closed in $\Delta$ of affine type and $\eta\in\Delta$
imaginary. Then $\eta\notin Y$:
\begin{equation}
\text{If }\eta\in Y,\quad \text{then }\ 2(\eta)=(3\eta)+(-\eta) \ \text{
implies }\ \pm \eta\in Y \ \  \Rightarrow\!\Leftarrow\ \text{(by Remark
\ref{0 not in 212-closed subsets})}. 
\end{equation}
\end{remark}

Recall from Tables Aff 1--Aff 3 and Subsection 6.3 of Kac's
book~\cite{Kac}: (1)~the subroot system $\mathring{\Delta}$ generated by
$\{\alpha_1,\ldots,\alpha_{\ell}\}$ is of finite type; (2) the roots in
$\Delta$ are explicitly describable in terms of the roots in
$\mathring{\Delta}$. Recall, there are at most two lengths in a finite
type root system. Let $\mathring{\Delta}_s$ and $\mathring{\Delta}_l$
denote the set of roots in $\mathring{\Delta}$ of the shortest and
longest lengths, respectively. Note,
$\mathring{\Delta}_s=\mathring{\Delta}_l$ when $\mathring{\Delta}$ is of
(simply laced) types $A_n,D_n,E_6,E_7,E_8$.
With the above preliminaries and observations, we are ready to state our
next main theorem, with one last notation:

For $\Delta$ of type $X_{\ell}^{(r)}$, $r\in\{1,2,3\}$ -- see
\cite[Tables Aff 1--Aff 3]{Kac} -- and $Y\subseteq \Delta\sqcup\{0\}$, define
	\begin{equation}\label{E2.4}
	Y_s:=\begin{cases}
	(Y\cap\mathring{\Delta}_s)+\mathbb{Z}\delta &\text{if } Y\cap \mathring{\Delta}_s\neq\emptyset,\\
	\emptyset &\text{if }Y\cap\mathring{\Delta}_s=\emptyset,
	\end{cases}\quad  Y_l:=\begin{cases}
	(Y\cap\mathring{\Delta}_l)+r\mathbb{Z}\delta &\text{if } Y\cap \mathring{\Delta}_l\neq\emptyset,\\
	\emptyset &\text{if }Y\cap\mathring{\Delta}_l=\emptyset.
	\end{cases}
	\end{equation}   
\begin{theorem}\label{weak faces of roots main theorem}
	\begin{itemize}
		\item[(a)] For $\mathfrak{g} \neq A_2$ of finite type, the following classes of subsets are all the same: 212-closed subsets of $\Delta$, 212-closed subsets of $\Delta\sqcup\{0\}$, weak faces of $\Delta$, weak faces of $\Delta\sqcup\{0\}$, and the subsets in \eqref{standard face}.
		\item[(b)] For $\mathfrak{g}$ of type $A_2$, the following classes of subsets are all the same: 212-closed subsets of $\Delta\sqcup\{0\}$, weak faces of $\Delta$, weak faces of $\Delta\sqcup\{0\}$, and the subsets in \eqref{standard face}. All of these subsets and the following additional ones, are all the 212-closed subsets of $\Delta$.    \begin{equation}
\label{non standard 212-closed subsets}		
W\text{-conjugates of}:\ \Pi=\{\alpha_1,\alpha_2\},\
\Delta^+=\{\alpha_1,\alpha_2,\alpha_2+\alpha_1\},\
\{\alpha_1,\alpha_2,-\alpha_2-\alpha_1\}.\end{equation}
			\item[(c)] Assume that $\mathfrak{g}$ is of affine type. Then the following two statements 1) and 2) are equivalent: 
	\begin{itemize}
		\item[1)] $Y$ is a 212-closed subset of $\Delta$
		\big(respectively, of $\Delta\sqcup\{0\}$\big).
		\item[2)] $Y=Z_s\cup Z_l$ for some 212-closed subset $Z$
		of $\mathring{\Delta}$ \big(respectively, of
		$\mathring{\Delta}\sqcup\{0\}$\big). 
	\end{itemize}
	For $\mathfrak{g}$ not of type $\widehat{A_2}$ (so
	$\mathring{\Delta}$ is not of type $A_2$) -- similar to part
	(a) -- we have the equality of the four classes: 212-closed
	subsets and weak faces of both $\Delta$ and $\Delta\sqcup\{0\}$.
	When $\mathfrak{g}$ is of type $\widehat{A_2}$
	($\mathring{\Delta}$ is of type $A_2$) -- as in part (b) -- these
	classes other than the 212-closed subsets of $\Delta$ are the
	same. The additional 212-closed subsets of $\Delta$ correspond to
	those of $\mathring{\Delta}$ in \eqref{non standard 212-closed
	subsets}.
	\end{itemize}
\end{theorem}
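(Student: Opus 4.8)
The plan is to treat the finite-type cases (a) and (b) first, and then bootstrap (c) from them. For the finite-type statements, the backbone is the already-established Proposition preceding the theorem: for $\mathfrak{g}$ of finite type, the $212$-closed subsets of $\Delta\sqcup\{0\}$, the weak faces of $\Delta\sqcup\{0\}$, the weights on faces of $\conv(\Delta\sqcup\{0\})$, and the subsets in \eqref{standard face} all coincide. By Remark \ref{0 not in 212-closed subsets}(2), every $212$-closed subset of $\Delta\sqcup\{0\}$ lies in $\Delta$ and is $212$-closed in $\Delta$; so the only thing to prove for (a) is the reverse — that a proper $212$-closed subset $Y$ of $\Delta$ is already $212$-closed in $\Delta\sqcup\{0\}$, i.e.\ that no new ``bad'' triple involving $0$ can appear. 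The potential violations, following Remark \ref{geometric interpretation of 212-closedness}, are $(y_1)+(y_2)=(0)+(x)$ with $y_1,y_2\in Y$. Here one uses $0\notin Y$ (Remark \ref{0 not in 212-closed subsets}) and the non-antipodality of $Y$ (same remark): if $y_1+y_2=x$ with $x$ a root and $y_1,y_2$ non-antipodal, one must show $x\in Y$ using $212$-closedness of $Y$ in $\Delta$ together with the structural fact that in a finite-type root system, away from type $A_2$, any configuration $y_1+y_2 = x$ (all roots) can be ``completed'' to a parallelogram or midpoint relation inside $\Delta$. This is where the type-$A_2$ exception enters: in $A_2$ the three positive roots $\{\alpha_1,\alpha_2,\alpha_1+\alpha_2\}$ form exactly such a relation $(\alpha_1)+(\alpha_2)=(0)+(\alpha_1+\alpha_2)$ with no ambient room to force closure, producing the extra subsets in \eqref{non standard 212-closed subsets}. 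So (b) is proved by a direct finite enumeration (the hexagon picture of Example \ref{212-closed subsets in type A_2 example}): list all subsets closed under the midpoint/parallelogram rules of Remark \ref{geometric interpretation of 212-closedness}, observe the three exceptional $W$-orbits, and check none of them is a weak face (e.g.\ $\Pi=\{\alpha_1,\alpha_2\}$ fails the weak-face test against $\tfrac12(\alpha_1)+\tfrac12(\alpha_2)=\tfrac12(\alpha_1+\alpha_2)+\tfrac12(0)$ type relations, since $\alpha_1+\alpha_2\notin\Pi$).

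For part (c), the strategy is to reduce affine-type $212$-closedness to the finite-type case via the $\delta$-periodicity encoded in \eqref{E2.4}. Recall from Kac's tables that every real root of $\Delta$ is $\mathring\beta+k\delta$ for $\mathring\beta\in\mathring\Delta$ and $k$ ranging over $\mathbb{Z}$ (with a divisibility restriction tied to $r$ on the long roots), and that $(n\delta)$ are the imaginary roots. By the Remark just before the theorem, no imaginary root lies in a $212$-closed $Y$, so $Y\subseteq W\Pi$ = real roots. For the implication 2)$\Rightarrow$1): given $Z$ a $212$-closed subset of $\mathring\Delta$ (resp.\ $\mathring\Delta\sqcup\{0\}$), one checks that $Y=Z_s\cup Z_l$ is $212$-closed in $\Delta$ by projecting any relation $(y_1)+(y_2)=(x_1)+(x_2)$ to its ``$\delta$-free'' component — this lands in a relation among elements of $\mathring\Delta$, to which $212$-closedness of $Z$ applies — and then matching up the $\delta$-coefficients, where the definitions of $Y_s$ (full $\mathbb{Z}\delta$-shifts) and $Y_l$ ($r\mathbb{Z}\delta$-shifts) are exactly tuned so the coefficients balance. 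For 1)$\Rightarrow$2): given $Y$ $212$-closed in $\Delta$, set $Z:=\{\mathring\beta \mid \mathring\beta+k\delta\in Y \text{ for some }k\}\subseteq\mathring\Delta$; one shows $Z$ is $212$-closed in $\mathring\Delta$ (lifting a finite-type relation to the affine one by adding appropriate $\delta$-multiples, using that the $\delta$-shift of any root in $Y$ that lies in $Y$ is forced by relations like $(\mathring\beta+k\delta)+(\mathring\beta+(k+2)\delta)=2(\mathring\beta+(k+1)\delta)$, which via $212$-closedness propagate the shift), and that $Y$ recovers as $Z_s\cup Z_l$ — the short/long dichotomy and the appearance of $r\mathbb{Z}\delta$ versus $\mathbb{Z}\delta$ being exactly the content of Kac's description of which $\mathring\beta+k\delta$ are roots. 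Finally, the last sentences of (c) follow by feeding parts (a) and (b) into this correspondence: when $\mathring\Delta\neq A_2$ the four classes coincide by (a), and when $\mathring\Delta=A_2$ (i.e.\ $\mathfrak{g}=\widehat{A_2}$) the extra $\mathring\Delta$-subsets from \eqref{non standard 212-closed subsets} produce, via $Z\mapsto Z_s\cup Z_l$, the extra $212$-closed subsets of $\Delta$ that are not weak faces.

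The main obstacle I expect is the bookkeeping in the $\delta$-coefficient matching for (c): one must verify that the choice $\mathbb{Z}\delta$ for short roots and $r\mathbb{Z}\delta$ for long roots in \eqref{E2.4} is forced — not merely sufficient — which requires a careful case analysis over the affine types $X_\ell^{(r)}$ using the explicit root descriptions in Kac's Tables Aff 1--Aff 3 (in particular handling the twisted types $r=2,3$ and the anomalous $A_{2\ell}^{(2)}$ where three root lengths occur, and confirming the stated theorem's implicit framing still applies). A secondary, more routine obstacle is the finite enumeration underlying (b): ensuring the list of $212$-closed subsets of the $A_2$ hexagon is exhaustive and that exactly the three displayed $W$-orbits are the ones failing to be weak faces. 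Once these are in hand, the equivalences with \eqref{standard face} and with weights-on-faces are immediate from the cited Proposition and Remark \ref{weak face is 212-closed}.
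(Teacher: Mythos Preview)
Your plan for parts (b) and (c) is essentially the paper's own. For (b) the paper also argues by direct enumeration on the $A_2$ hexagon, and your non--weak-face test for $\Pi$ is exactly the relation the paper displays. For (c) the paper likewise invokes Kac's explicit description of affine roots to pass between $\Delta$ and $\mathring{\Delta}$; the paper in fact says the equivalence of 1) and 2) is \emph{immediate} from that description, so what you flag as the main obstacle (the $\delta$-coefficient bookkeeping, including the $A_{2\ell}^{(2)}$ three-length anomaly) is real but evidently surmountable, and your worry there is well placed rather than misplaced.

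There is, however, a genuine gap in your treatment of (a). You correctly isolate the only obstruction to upgrading a proper $212$-closed $Y\subset\Delta$ to one in $\Delta\sqcup\{0\}$: a relation $(y_1)+(y_2)=(0)+(x)$ with $y_1,y_2\in Y$ and $x\in\Delta$. But your stated sub-goal, ``one must show $x\in Y$,'' is the wrong target. The $212$-closed condition requires \emph{both} $x\in Y$ and $0\in Y$; since $0\notin Y$ always (Remark~\ref{0 not in 212-closed subsets}), proving $x\in Y$ buys nothing. What you actually need is that no such relation exists at all: for a proper $212$-closed $Y\subset\Delta$ with $\Delta$ not of type $A_2$, no two elements of $Y$ sum to a root. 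Your ``structural fact'' (that any $y_1+y_2=x$ among roots can, outside $A_2$, be completed to a parallelogram or midpoint relation inside $\Delta$) is pointed in the right direction but is stated too vaguely to do this work, and as written it would at best force $x\in Y$, not rule out the relation. The paper does not attempt a uniform structural argument here either: it verifies the rank-two cases $A_1$, $B_2$, $G_2$ by inspection of their root pictures and defers the remaining finite types to a case analysis in \cite{WFHMRS}. So your high-level reduction is sound, but the crucial step for (a) needs to be reformulated as ``show $y_1+y_2\notin\Delta$'' (or, equivalently, that the existence of such a sum forces $Y$ to be improper), and then carried out type by type rather than via the single unproven structural claim.
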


\begin{proof}[Ideas in proof]
In \cite{WFHMRS}, each part of the theorem is shown in cases. Part (a)
for types $A_1$, $B_2$ and $G_2$, as well as part (b), can be verified
alternately via looking at their pictures/plots and the observations
similar to those in Example \ref{212-closed subsets in type A_2 example}.
In particular, in this manner, for $\Delta$ of type $A_2$, the
disjointedness of the two lists in \eqref{standard face} and \eqref{non
standard 212-closed subsets} can be verified. If $Y$ belongs to the list
in \eqref{non standard 212-closed subsets}, then $Y$ is not a weak face
of $\Delta$ of type $A_2$, because 
\[
\text{for any } \mathbb{A}\subseteq (\mathbb{R},+)\text{ and }0<
a\in\mathbb{A},\quad a(\alpha_1)+a(\alpha_2) = a(\alpha_1+\alpha_2)+(0)\
\implies 0\in Y,
\]
which is a contradiction. The proof in \cite{WFHMRS} of part (c), where
$\Delta$ is of affine type, runs over four steps. In all of them, we
heavily use the description of the roots given by the well-known result
\cite[Proposition 6.3]{Kac}. This description and the definition of
212-closed sets immediately give the equivalence of points 1) and 2) in
part (c). This close relation between the 212-closed subsets of $\Delta$
and $\mathring{\Delta}$ leads to the equivalences of the various classes
of sets in part (c).
\end{proof}

We conclude with our final main theorem for the set of weights and their
convex hull, for an arbitrary highest weight $\mathfrak{g}$-module $V$.
When $V=L(\lambda)$, part (a) of our theorem below recovers \cite[Theorem
C]{KhareJ.Alg} of Khare. We define the \textit{integrability} of $V$ to
be $I_V:=\{i\in \mathcal{I}\ |\ f_i \text{ acts nilpotently on each
vector in }V\text{ or equivalently on}\ V_{\lambda}\}$.
Recall, $V$ is an integrable $\mathfrak{g}_{I_V}$-module, and so $\weight
V$ and $\conv(\weight V)$ are $W_{I_V}$-invariant. $I_V$ determines
$\weight V$ to a significant extent, and $\conv(\weight V)$ completely,
by \cite{KhareJ.Alg} and \cite{KhareAdv}.

	\begin{theorem} \label{weak faces of weights main theorem}
	Let $V$ be an arbitrary highest weight $\mathfrak{g}$-module of
	highest weight $\lambda\in\mathfrak{h}^*$. Then:
	\begin{itemize}
	\item[(a)] The following classes of subsets are equal:
	(1)~Weak faces of $\weight V$.
	(2)~212-closed subsets of $\weight V$.
	(3)~The subsets
	\begin{equation}\label{standard faces of wt V}
	    w[(\lambda-\mathbb{Z}_{\geq 0}\Pi_I)\cap\weight V]\qquad  \text{for all }\  w \in W_{I_V}\ \text{ and }\ I\subseteq \mathcal{I}.
	\end{equation}
	\item[(b)] The following classes of subsets of $X =
	\conv_{\mathbb{R}} (\weight V)$ are equal:
	(1)~Exposed faces, i.e., the maximizer subsets of $X$ with
	respect to linear functionals.
	(2)~Weak faces of $X$.
	(3)~212-closed subsets of $X$.
	(4)~The convex hulls of the subsets in \eqref{standard faces of wt V}. 
   \end{itemize}
	\end{theorem}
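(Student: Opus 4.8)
The plan is to prove Theorem~\ref{weak faces of weights main theorem} by leveraging the Minkowski difference formula \eqref{Minkowski difference formula for wt V} together with Theorem~\ref{weak faces of roots main theorem} for root systems, reducing the statement about weights to statements about the ``$W_{I_V}$-part'' $\weight_{J_\lambda}V$ and the cone $\mathbb{Z}_{\geq 0}\Delta_{J_\lambda^c,\ 1}$. First I would settle the chain of easy implications: by Remark~\ref{weak face is 212-closed}, every exposed face is a weak-$\mathbb{R}$-face, every weak-$\mathbb{R}$-face is a weak-$\mathbb{A}$-face, and every weak-$\mathbb{A}$-face is $212$-closed; so in both (a) and (b) it suffices to show (i) the sets in \eqref{standard faces of wt V} (resp.\ their convex hulls) \emph{are} exposed faces / weak faces, and (ii) every $212$-closed subset $Y$ is of the form \eqref{standard faces of wt V} (resp.\ its convex hull). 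For the convex-hull statement (b), I would additionally invoke \cite{Ridenour}: once $\conv(\weight V)$ is known to be a polyhedron (which follows from \eqref{Minkowski difference formula for wt V}, being the Minkowski sum of a $W_{I_V}$-polytope $\conv(\weight_{J_\lambda}V)$ and the finitely generated cone $-\mathbb{R}_{\geq 0}\Delta_{J_\lambda^c,1}$), the equivalence of ``exposed face'', ``weak face'', and ``face'' is automatic, so the real content of (b) is identifying the faces with the convex hulls in (4), and then (a) and (b) feed each other.

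For the ``easy'' direction (i), I would exhibit, for each $w\in W_{I_V}$ and $I\subseteq\mathcal I$, an explicit linear functional maximized exactly on $w[(\lambda-\mathbb Z_{\geq0}\Pi_I)\cap\weight V]$. The natural candidate is built from the fundamental weights: something like $(w(\lambda-\text{something supported off }I),\,-\,)$, mirroring the functional $(w\sum_{j\in I^c}\omega_j,\,-\,)$ used in \eqref{standard face} for root systems; one checks it is constant and maximal on the claimed face using that $\weight V\subseteq\lambda-\mathbb Z_{\geq0}\Pi$ and that the face consists precisely of weights obtained by moving from $w\lambda$ only in the $\Pi_I$-directions. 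The $W_{I_V}$-invariance of $\weight V$ is what lets us conjugate by arbitrary $w\in W_{I_V}$. This part is essentially bookkeeping and I don't expect obstacles.

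The hard part will be direction (ii): showing an arbitrary $212$-closed $Y\subseteq\weight V$ has the form \eqref{standard faces of wt V}. Here is the strategy. Given $212$-closed $Y$, first use the highest-weight structure: I expect one shows that $Y$ must contain a ``$W_{I_V}$-extremal'' weight, and after translating by a suitable $w\in W_{I_V}$ we may assume the highest weight $\lambda$ of the relevant sub-picture lies in $Y$ (the key mechanism: if $\mu\in Y$ and $\mu$ is not extremal in some $\mathfrak g_{I_V}$-direction, the $212$-closed/geometric relations of Remark~\ref{geometric interpretation of 212-closedness} force a more extremal weight into $Y$; this is the analogue of how singletons and faces behave in Example~\ref{212-closed subsets in type A_2 example}). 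Once $\lambda\in Y$, I would use formula \eqref{Minkowski difference formula for wt V} to write every weight $\mu\in\weight V$ as $\lambda-\sum_{j\in J_\lambda}c_j\alpha_j-\sum_{k=1}^n\gamma_k$ with $\gamma_k\in\Delta_{J_\lambda^c,1}$, and then run an induction on $\height_{J_\lambda^c}(\lambda-\mu)=n$: at each step, if $\mu\in Y$ and $\gamma\in\Delta_{J_\lambda^c,1}$ with $\mu+\gamma\in\weight V$, an equation of the form $2(\mu)=(\mu-\eta)+(\mu+\eta)$ or a parallelogram relation using the parabolic PSP (to break $\gamma$ and organize partial sums as actual weights) forces $\mu+\gamma$ and hence, inductively, the whole ``upper'' part into $Y$; symmetrically $Y$ cannot ``skip'' weights. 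The set $I$ in \eqref{standard faces of wt V} is then read off as (roughly) the set of simple-root directions in which $Y$ is \emph{not} extremal at $\lambda$. The main obstacle — and where I'd expect to spend the most care — is this inductive ``propagation'' step in affine type, because the imaginary root $\delta$ and the two root lengths make the combinatorics of which $212$-relations are available subtler; this is exactly where Theorem~\ref{weak faces of roots main theorem}(c) (the $Z_s\cup Z_l$ description, with its $\widehat{A_2}$ exception) gets invoked, to control the possible ``shapes'' of $Y$ restricted to the root-string directions. Finally, I'd note that unlike the pure root-system case, no $A_2$/$\widehat{A_2}$ exception survives for $\weight V$: the presence of the highest weight $\lambda$ (playing the role $0$ plays in $\Delta\sqcup\{0\}$) rules out the exotic configurations in \eqref{non standard 212-closed subsets}, which is why Theorem~\ref{weak faces of weights main theorem} is clean.
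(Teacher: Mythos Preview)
The paper does not supply a proof or even a sketch for Theorem~\ref{weak faces of weights main theorem}; the result is announced here with the full argument deferred to~\cite{WFHMRS}. There is therefore nothing in the present paper to compare your strategy against directly.

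On the merits of your outline, two points deserve caution. First, your polyhedrality argument for part~(b) assumes $\conv(\weight_{J_\lambda}V)$ is a polytope; but in affine type with $J_\lambda=\mathcal I$ (e.g.\ $\lambda$ dominant integral), $\mathfrak g_{J_\lambda}=\mathfrak g$ is itself affine, $\Delta_{J_\lambda^c,1}=\emptyset$, and $\weight_{J_\lambda}V=\weight V$ is unbounded, so the Minkowski decomposition~\eqref{Minkowski difference formula for wt V} is vacuous and the appeal to~\cite{Ridenour} does not go through as written. This case needs a separate treatment. Second, your plan to invoke Theorem~\ref{weak faces of roots main theorem}(c) to ``control shapes of $Y$ on root-string directions'' is too loose to count as a step: that theorem classifies $212$-closed subsets of $\Delta$ itself, and a $212$-closed subset of $\weight V$ does not in any obvious way restrict to one of $\Delta$ along a root string. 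The hard direction in~(a) --- extracting $w\in W_{I_V}$ and $I\subseteq\mathcal I$ from an arbitrary $212$-closed $Y$ --- really hinges on the integrability $I_V$ (not $J_\lambda$, which you use interchangeably), and the propagation you sketch must be made precise about which $212$-relations among \emph{actual weights of $V$} are available; the parabolic PSP produces chains of weights but does not by itself force membership in $Y$.
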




\end{document}